%version 6/03/17 subida al arXiv

\documentclass[11pt]{amsart}
\usepackage{amsopn}
\usepackage{amssymb, amscd}
\usepackage{multirow}

\topmargin 0cm
\evensidemargin 0.5cm
\oddsidemargin 0.5cm
\textwidth 15cm \textheight 23cm

\newcommand{\nc}{\newcommand}

\nc{\fg}{\mathfrak{f} } \nc{\vg}{\mathfrak{v} } \nc{\wg}{\mathfrak{w} }
\nc{\zg}{\mathfrak{z} } \nc{\ngo}{\mathfrak{n} } \nc{\kg}{\mathfrak{k} }
\nc{\mg}{\mathfrak{m} } \nc{\bg}{\mathfrak{b} } \nc{\ggo}{\mathfrak{g} }
\nc{\ggob}{\overline{\mathfrak{g}} } \nc{\sog}{\mathfrak{so} }
\nc{\sug}{\mathfrak{su} } \nc{\spg}{\mathfrak{sp} } \nc{\slg}{\mathfrak{sl} }
\nc{\glg}{\mathfrak{gl} } \nc{\cg}{\mathfrak{c} } \nc{\rg}{\mathfrak{r} }
\nc{\hg}{\mathfrak{h} } \nc{\tg}{\mathfrak{t} } \nc{\ug}{\mathfrak{u} }
\nc{\dg}{\mathfrak{d} } \nc{\ag}{\mathfrak{a} } \nc{\pg}{\mathfrak{p} }
\nc{\sg}{\mathfrak{s} } \nc{\affg}{\mathfrak{aff} } \nc{\qg}{\mathfrak{q} }

\nc{\pca}{\mathcal{P}} \nc{\nca}{\mathcal{N}} \nc{\lca}{\mathcal{L}}
\nc{\oca}{\mathcal{O}} \nc{\mca}{\mathcal{M}} \nc{\tca}{\mathcal{T}}
\nc{\aca}{\mathcal{A}} \nc{\cca}{\mathcal{C}} \nc{\gca}{\mathcal{G}}
\nc{\sca}{\mathcal{S}} \nc{\hca}{\mathcal{H}} \nc{\bca}{\mathcal{B}}
\nc{\dca}{\mathcal{D}} \nc{\val}{\operatorname{val}}

\nc{\vp}{\varphi} \nc{\ddt}{\frac{d}{dt}} \nc{\dds}{\frac{d}{ds}}
\nc{\dpar}{\frac{\partial}{\partial t}} \nc{\im}{\mathrm{i}}

\nc{\SO}{\mathrm{SO}} \nc{\Spe}{\mathrm{Sp}} \nc{\Sl}{\mathrm{SL}}
\nc{\SU}{\mathrm{SU}} \nc{\Or}{\mathrm{O}} \nc{\U}{\mathrm{U}} \nc{\Gl}{\mathrm{GL}}
\nc{\Se}{\mathrm{S}} \nc{\Cl}{\mathrm{Cl}} \nc{\Spein}{\mathrm{Spin}}
\nc{\Pin}{\mathrm{Pin}} \nc{\G}{\mathrm{GL}_n(\RR)} \nc{\g}{\mathfrak{gl}_n(\RR)}

\nc{\RR}{{\Bbb R}} \nc{\HH}{{\Bbb H}} \nc{\CC}{{\Bbb C}} \nc{\ZZ}{{\Bbb Z}}
\nc{\FF}{{\Bbb F}} \nc{\NN}{{\Bbb N}} \nc{\QQ}{{\Bbb Q}} \nc{\PP}{{\Bbb P}} \nc{\OO}{{\Bbb O}}

\nc{\vs}{\vspace{.2cm}} \nc{\vsp}{\vspace{1cm}} \nc{\ip}{\langle\cdot,\cdot\rangle}
\nc{\ipp}{(\cdot,\cdot)} \nc{\la}{\langle} \nc{\ra}{\rangle} \nc{\unm}{\tfrac{1}{2}}
\nc{\unc}{\tfrac{1}{4}} \nc{\und}{\tfrac{1}{16}} \nc{\no}{\vs\noindent}
\nc{\lam}{\Lambda^2(\RR^n)^*\otimes\RR^n} \nc{\tangz}{{\rm T}^{\rm Zar}}
\nc{\nor}{{\sf n}}  \nc{\mum}{/\!\!/} \nc{\kir}{/\!\!/\!\!/}
\nc{\Ri}{\tfrac{4\Ric_{\mu}}{||\mu||^2}} \nc{\ds}{\displaystyle}
\nc{\ben}{\begin{enumerate}} \nc{\een}{\end{enumerate}} \nc{\f}{\frac}
\nc{\lb}{[\cdot,\cdot]} \nc{\isn}{\tfrac{1}{||v||^2}}
\nc{\gkp}{(\ggo=\kg\oplus\pg,\ip)} \nc{\ukh}{(\ug=\kg\oplus\hg,\ip)}
\nc{\tgkp}{(\tilde{\ggo}=\kg\oplus\pg,\ip)}
\nc{\wt}{\widetilde} \nc{\mm}{M}
\nc{\iop}{\mathtt{i}} \nc{\jop}{\mathtt{j}}

\nc{\Hess}{\operatorname{Hess}} \nc{\ad}{\operatorname{ad}}
\nc{\Ad}{\operatorname{Ad}} \nc{\rank}{\operatorname{rank}}
\nc{\Irr}{\operatorname{Irr}} \nc{\End}{\operatorname{End}}
\nc{\Aut}{\operatorname{Aut}} \nc{\Inn}{\operatorname{Inn}}
\nc{\Der}{\operatorname{Der}} \nc{\Ker}{\operatorname{Ker}}
\nc{\Iso}{\operatorname{Iso}} \nc{\Diff}{\operatorname{Diff}}
\nc{\Lie}{\operatorname{L}} \nc{\tr}{\operatorname{tr}} \nc{\dif}{\operatorname{d}}
\nc{\sen}{\operatorname{sen}} \nc{\modu}{\operatorname{mod}}
\nc{\CRic}{\operatorname{PP}} \nc{\Cric}{\operatorname{P}} \nc{\Ricci}{\operatorname{Ric}}
\nc{\sym}{\operatorname{sym}} \nc{\herm}{\operatorname{herm}} \nc{\symac}{\operatorname{sym^{ac}}}
\nc{\symc}{\operatorname{sym^{c}}} \nc{\scalar}{\operatorname{sc}}
\nc{\grad}{\operatorname{grad}} \nc{\ricci}{\operatorname{Rc}}
\nc{\Nor}{\operatorname{Norm}}  \nc{\ricc}{\operatorname{Rc^{c}}}
\nc{\Ricc}{\operatorname{Ric^{c}}} \nc{\ricac}{\operatorname{Rc^{ac}}}
\nc{\Ricac}{\operatorname{Ric^{ac}}} \nc{\Riem}{\operatorname{Rm}}
\nc{\riccig}{\operatorname{ric^{\gamma}}} \nc{\Rin}{\operatorname{M}}
\nc{\Le}{\operatorname{L}} \nc{\tang}{\operatorname{T}}
\nc{\level}{\operatorname{level}} \nc{\rad}{\operatorname{r}}
\nc{\abel}{\operatorname{ab}} \nc{\CH}{\operatorname{CH}}
\nc{\mcc}{\operatorname{mcc}} \nc{\Adj}{\operatorname{Adj}}
\nc{\Order}{\operatorname{O}}  \nc{\inj}{\operatorname{inj}} \nc{\proy}{\operatorname{pr}}
\nc{\vol}{\operatorname{vol}} \nc{\Diag}{\operatorname{Dg}}
\nc{\Spec}{\operatorname{Spec}} \nc{\Ima}{\operatorname{Im}} \nc{\Rea}{\operatorname{Re}}
\nc{\spann}{\operatorname{span}}

\theoremstyle{plain}
\newtheorem{theorem}{Theorem}[section]
\newtheorem{proposition}[theorem]{Proposition}

\newtheorem{lemma}[theorem]{Lemma}

\theoremstyle{definition}
\newtheorem{definition}[theorem]{Definition}

\theoremstyle{remark}
\newtheorem{remark}[theorem]{Remark}

\newtheorem{example}[theorem]{Example}
\newtheorem{question}[theorem]{Question}

\title[Laplacian solitons]{Laplacian solitons: questions and homogeneous examples}

\author{Jorge Lauret}

\address{Universidad Nacional de C\'ordoba, FaMAF and CIEM, 5000 C\'ordoba, Argentina}
\email{lauret@famaf.unc.edu.ar}

\thanks{This research was partially supported by grants from CONICET, FONCYT and SeCyT (Universidad Nacional de C\'ordoba)}

\begin{document}

\maketitle

\begin{abstract}
We give the first examples of closed Laplacian solitons which are shrinking, and in particular produce closed Laplacian flow solutions with a finite-time singularity.  Extremally Ricci pinched $G_2$-structures (introduced by Bryant) which are steady Laplacian solitons have also been found.  All the examples are left-invariant $G_2$-structures on solvable Lie groups.
\end{abstract}

%\tableofcontents

\section{Introduction}\label{intro}

A $G_2$-{\it structure} on a $7$-dimensional differentiable manifold $M$ is a positive (or definite) differential $3$-form on $M$.  Each $G_2$-structure $\vp$ defines a Riemannian metric $g$ on $M$ and an orientation.  In the case when $\vp$ is closed, which will be assumed throughout this paper, the only torsion form that survives is a $2$-form $\tau$, and the starting situation can be described as follows:
\begin{equation}\label{tauphi-intro}
d\vp=0, \qquad \tau=-\ast d\ast\vp, \qquad d\ast\vp=\tau\wedge\vp, \qquad d\tau=\Delta\vp,
\end{equation}
where $\Delta$ denotes the Hodge Laplacian operator determined by $g$.

The following nice interplay between the metric and the torsion $2$-form of a closed $G_2$-structure was discovered by R. Bryant.  Let $R$ and $\Ricci$ denote the scalar and Ricci curvature of the associated metric $g$, respectively.

\begin{theorem}\cite[Corollary 3]{Bry}\label{Bineq-intro}
If $\vp$ is a closed $G_2$-structure on a compact manifold $M$, then
$$
\int_M R^2 \ast 1 \leq 3\int_M |\Ricci|^2 \ast 1,
$$
and equality holds if and only if $\; d\tau = \frac{1}{6}|\tau|^2\vp + \frac{1}{6}\ast(\tau\wedge\tau)$.
\end{theorem}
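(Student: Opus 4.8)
The plan is to derive the inequality and its equality case from Bryant's pointwise curvature formulas for closed $G_2$-structures together with one integration by parts on the closed manifold $M$. As a first step I record some type decompositions. For a closed $G_2$-structure $\tau\in\Omega^2_{14}$ and, up to sign, $\tau=d^{*}\varphi$, so that $d^{*}\tau=0$ and $\Delta\varphi=d\,d^{*}\varphi=d\tau$. Using $d\varphi=0$, $d\ast\varphi=\tau\wedge\varphi$ together with the identities $\tau\wedge\ast\varphi=0$ and $\tau\wedge\varphi=-\ast\tau$ (hence $\tau\wedge\tau\wedge\varphi=-|\tau|^2\ast 1$) valid for $\tau\in\Omega^2_{14}$, one obtains the pointwise relations $d\tau\wedge\ast\varphi=|\tau|^2\ast 1$ and $d\tau\wedge\varphi=0$. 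These say precisely that $\Delta\varphi=d\tau$ has no $\Omega^3_7$-component and that its $\Omega^3_1$-component equals $\tfrac{1}{7}|\tau|^2\varphi$; likewise $\ast(\tau\wedge\tau)$ has no $\Omega^3_7$-component --- because $S^2(\Omega^2_{14})$ contains no copy of the $7$-dimensional $G_2$-module --- and its $\Omega^3_1$-component equals $-\tfrac{1}{7}|\tau|^2\varphi$. Consequently the $3$-form
$$\chi:=d\tau-\tfrac{1}{6}|\tau|^2\varphi-\tfrac{1}{6}\ast(\tau\wedge\tau)$$
has vanishing $\Omega^3_1$- and $\Omega^3_7$-components, i.e.\ lies in $\Omega^3_{27}$, and the asserted equality condition is exactly $\chi\equiv 0$ on $M$.

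Next I would invoke Bryant's formulas for the curvature of the metric $g=g_\varphi$ determined by a closed $G_2$-structure: the scalar curvature is $R=-\tfrac{1}{2}|\tau|^2$, and the trace-free Ricci tensor $\Ricci_0=\Ricci-\tfrac{1}{7}Rg$ is a fixed linear combination $a\,\gamma+b\,\sigma$ of the symmetric $2$-tensors obtained, via the isomorphism $\Omega^3_{27}\cong S^2_0(T^{*}M)$, from the $\Omega^3_{27}$-components $\gamma$ of $d\tau$ and $\sigma$ of $\ast(\tau\wedge\tau)$. That $\Ricci_0$ must take this form is forced by representation theory: $\Ricci$ is linear in $\nabla\tau$ plus quadratic in $\tau$; the constraint $d^{*}\tau=0$ removes the $7$-dimensional part of $\nabla\tau$, and its remaining $64$-dimensional part cannot map into the $27$-dimensional space containing $\Ricci_0$, so the linear-in-$\nabla\tau$ contribution is a multiple of $\gamma$, while the quadratic-in-$\tau$ contribution must land in the unique $27$-dimensional summand of $S^2(\Omega^2_{14})$, which is $\sigma$; only Bryant's explicit coefficients need then to be read off. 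Combining $R^2=\tfrac{1}{4}|\tau|^4$ with the orthogonal splitting $|\Ricci|^2=\tfrac{1}{7}R^2+|\Ricci_0|^2$ gives the pointwise identity $3|\Ricci|^2-R^2=3|\Ricci_0|^2-\tfrac{1}{7}|\tau|^4$.

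The one global ingredient is the following. Since $d(\tau\wedge\tau\wedge\tau)=3\,d\tau\wedge\tau\wedge\tau$ is an exact top form on the closed $7$-manifold $M$, Stokes' theorem gives
$$\int_M\langle d\tau,\ast(\tau\wedge\tau)\rangle\ast 1=\int_M d\tau\wedge\tau\wedge\tau=0.$$
Expanding the integrand with the type decompositions above turns this into a linear identity that expresses $\int_M|\tau|^4\ast 1$ through the $L^2$-pairing of $\gamma$ with $\sigma$. Substituting this identity into $\int_M(3|\Ricci_0|^2-\tfrac{1}{7}|\tau|^4)\ast 1$ and using $\Ricci_0=a\,\gamma+b\,\sigma$, the resulting quadratic expression in $\gamma$ and $\sigma$ reduces --- for Bryant's particular values of $a$ and $b$ --- to a positive constant times $\int_M|\chi|^2\ast 1$, where $\chi$ is the $3$-form of the first paragraph. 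This yields $\int_M R^2\ast 1\le 3\int_M|\Ricci|^2\ast 1$, with equality if and only if $\chi\equiv 0$ on $M$, that is, $d\tau=\tfrac{1}{6}|\tau|^2\varphi+\tfrac{1}{6}\ast(\tau\wedge\tau)$.

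The step I expect to be the main obstacle is the bookkeeping of normalizations --- Bryant's conventions for $g_\varphi$, for the maps $\mathbf{i}_\varphi$ and $\mathbf{j}_\varphi$ between $3$-forms and symmetric $2$-tensors, and for the curvature formulas --- since only with the correct coefficients does the quadratic form in $\gamma$ and $\sigma$ left after the single integration by parts collapse to the perfect square $c\,|\chi|^2$ with $c>0$. This completion of the square, which hinges on the multiplicity-one of the $27$-dimensional module inside $S^2(\Omega^2_{14})$ together with the precise values of Bryant's constants, is the real content of the theorem.
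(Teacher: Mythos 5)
The paper does not actually prove this statement—it is quoted from Bryant with a citation—so the only thing to compare against is Bryant's argument, and your outline is essentially his: pointwise curvature formulas, the single Stokes identity $\int_M d\tau\wedge\tau\wedge\tau=0$, and $G_2$-representation theory. Your preliminary steps are all correct: $d\tau$ and $\ast(\tau\wedge\tau)$ lie in $\Lambda^3_1\oplus\Lambda^3_{27}$ with $\Lambda^3_1$-components $\tfrac17|\tau|^2\vp$ and $-\tfrac17|\tau|^2\vp$ respectively, $\chi\in\Lambda^3_{27}$, the pointwise identity $3|\Ricci|^2-R^2=3|\Ricci_0|^2-\tfrac17|\tau|^4$ holds, and Stokes gives $\int_M|\tau|^4\ast 1=7\int_M\la\gamma,\sigma\ra\ast 1$ for the $\Lambda^3_{27}$-components $\gamma,\sigma$ of $d\tau$ and $\ast(\tau\wedge\tau)$.

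The gap is in the final step. After that substitution the integrand is, up to the normalization of $\jop$, $\; 3k|\gamma|^2-(3k+1)\la\gamma,\sigma\ra+\tfrac{3k}{4}|\sigma|^2$ for some $k>0$ (with Bryant's conventions $k=\tfrac12$, giving $\tfrac32|\gamma|^2-\tfrac52\la\gamma,\sigma\ra+\tfrac38|\sigma|^2$). As a quadratic form in the pair $(\gamma,\sigma)$ this is \emph{indefinite}—its discriminant is $6k+1>0$—and no choice of Bryant's constants turns it into a positive multiple of $|\gamma-\tfrac16\sigma|^2$: the ratio of the $|\sigma|^2$- to the $|\gamma|^2$-coefficient is $\tfrac14$ rather than the required $\tfrac1{36}$, independently of $k$. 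So "completing the square" in $\gamma$ and $\sigma$ as free variables fails, and this is not a normalization issue. What rescues the argument is a further pointwise algebraic identity that you never invoke: for $\tau\in\Lambda^2_{14}$ one has $|\ast(\tau\wedge\tau)|^2=|\tau|^4$, equivalently $\tr(\tau^4)=\tfrac14(\tr\tau^2)^2$, equivalently $|\sigma|^2=\tfrac67|\tau|^4$. This holds because $\ggo_2$ has no primitive degree-$4$ invariant on $\Lambda^2_{14}\cong\ggo_2$ (its invariants are generated in degrees $2$ and $6$); it is a multiplicity-one statement about the trivial module in $S^4(\Lambda^2_{14})$, not the one about $\mathbf{27}\subset S^2(\Lambda^2_{14})$ that you cite. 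Once $|\sigma|^2$ is pinned to $|\tau|^4$ (and hence, via the Stokes identity, to $\int_M\la\gamma,\sigma\ra\ast 1$), the displayed integral does collapse to $\tfrac32\int_M|\chi|^2\ast 1\geq 0$ with equality exactly when $\chi\equiv 0$, which is the theorem. As written, though, your proof does not close.
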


The special $G_2$-structures for which equality holds were called {\it extremally Ricci-pinched} (ERP for short) in \cite[Remark 13]{Bry}.   Note that the factor of $3$ on the right hand side is much smaller than the a priori factor of $7$ provided by the Cauchy-Schwartz inequality, which is attained precisely at Einstein metrics.

Another distinguished class of closed $G_2$-structures are {\it Laplacian solitons} (LS for short):
$$
\Delta\vp=c\vp+\lca_{X}\vp, \qquad \mbox{for some}\quad c\in\RR, \quad X\in\mathfrak{X}(M)\; \mbox{(complete)},
$$
which are also characterized as those $G_2$-structures flowing in a self-similar way along the Laplacian flow $\dpar\vp(t) = \Delta\vp(t)$.  They are respectively called {\it expanding}, {\it steady} or {\it shrinking}, if $c>0$, $c=0$ or $c<0$.

In \cite{LF}, the Laplacian flow and its solitons were studied on homogeneous spaces.  The present paper consists of a collection of noncompact homogeneous examples that were obtained as applications of some of the structural results in \cite{LF}.  We consider the invariant functional
$$
F:=\frac{R^2}{|\Ricci|^2}, \qquad 0\leq F\leq 7,
$$
on the space of all non-flat homogeneous $G_2$-structures.  The following natural questions arise from a comparison between the compact and homogeneous cases:

\begin{quote}
(Q1) Does the inequality $F\leq 3$ hold for any homogeneous closed $G_2$-structure?
\end{quote}

Note that this holds after integrating on $M$ in the compact case by Theorem \ref{Bineq-intro}.  We expect that $F<7$, considering that no solvable Lie group admits a left-invariant Einstein closed (non-parallel) $G_2$-structure (see \cite{FrnFinMnr2}).

\begin{quote}
(Q2) Are there shrinking or steady (non-parallel) homogeneous LSs?
\end{quote}

In the compact case, it was proved in \cite{Lin} that there are no shrinking Laplacian solitons and that the only steady ones are parallel (or torsion-free, i.e.\ $\tau=0$) $G_2$-structures.  On the other hand, examples of closed expanding LSs were given on solvable and nilpotent Lie groups in \cite{BF, LF, Ncl}.

Within the class of left-invariant closed $G_2$-structures on solvable Lie groups with a codimension one abelian normal subgroup, the above questions were answered in \cite[Section 5]{LF}: all Laplacian flow solutions are immortal, any LS is either expanding or parallel and $F\leq 1$.  Moreover, equality $F=1$ holds  precisely at non-nilpotent expanding LSs.  The following is the main result of this paper, showing that none of these properties is longer true for general solvable Lie groups (see Examples \ref{Htype} and \ref{Htype-ineq}).

\begin{theorem}
There is a one-parameter family of pairwise non-isomorphic solvable Lie groups $G_a$, $a\in\left[\unc,\infty\right)$, each endowed with a left-invariant closed $G_2$-structure, such that:

\begin{itemize}
\item $G_a$ is a shrinking LS for any $a\in\left[\unc,1\right)$.  In particular, the Laplacian flow solution $\vp(t)$ starting at any of these $G_2$-structures develops a singularity at a finite time $T$.  Moreover, $\vp(t)$ converges to zero as $t\to T$ at a rate of $(T-t)^{\alpha}$ with $\unc\leq\alpha<\unm$ depending on $a$.

\item $G_1$ is ERP and a steady LS.  There is a compact quotient $M$ of $G_1$ locally equivalent to $G_1$; in particular, $M$ is also ERP and the corresponding Laplacian flow solution on $M$ is eternal and `locally' self-similar.

\item $G_a$ is an expanding LS for any $a\in(1,\infty)$.

\item $F$ is strictly decreasing on $\left[\unc,\infty\right)$ and
$$
F(\unc)=\tfrac{81}{17} \; (\approx 4.76) \quad > \quad  3=F(1) \quad > \quad 1=\lim_{a\to\infty} F(a).
$$
\end{itemize}
\end{theorem}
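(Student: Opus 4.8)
The plan is to construct the Lie groups $G_a$ explicitly via their Lie algebras and a fixed orthonormal coframe, write down a concrete closed $G_2$-structure $\vp$ in terms of that coframe, and then reduce all four bullet points to finite-dimensional computations on the $7$-dimensional metric Lie algebra. First I would exhibit a solvable Lie algebra $\sg_a = \RR A \ltimes \ngo$ where $\ngo$ is a nilpotent (plausibly $H$-type, given the reference to Examples \ref{Htype} and \ref{Htype-ineq}) ideal of dimension $6$ and $\ad_A$ is a fixed derivation whose eigenvalues depend on the parameter $a$; pairwise non-isomorphism for distinct $a$ should follow from comparing the eigenvalue data of $\ad_A$ (e.g.\ trace of $\ad_A$ normalized against $\ad_A$ restricted to the derived algebra, which is an isomorphism invariant). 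The standard $G_2$-form $\vp = e^{123} + e^1\wedge(e^{45}+e^{67}) + \cdots$ written in the chosen coframe must be checked to be closed, i.e.\ $d\vp = 0$, which becomes a linear condition on the structure constants; this pins down the admissible form of $\ad_A$ and the bracket relations in $\ngo$.

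Next I would compute, once and for all as functions of $a$, the torsion $2$-form $\tau = -\ast d\ast\vp$, the Hodge Laplacian $\Delta\vp = d\tau$, the Ricci operator $\Ricci$ of the associated left-invariant metric, and the scalar curvature $R$; for left-invariant structures these are all algebraic expressions in the structure constants, computable by the moving-frame / Koszul formulas used in \cite{LF}. With $\Delta\vp$ and $\Ricci$ in hand, I would verify the soliton equation $\Delta\vp = c_a\vp + \lca_{X_a}\vp$ by exhibiting the vector field $X_a$ explicitly: the natural candidate is the field generated by a one-parameter group of automorphisms of $G_a$, so that $\lca_{X_a}\vp$ is the infinitesimal action of a derivation $D_a \in \Der(\sg_a)$ on $\vp$, and the soliton equation collapses to the Lie-algebra identity $\Delta\vp = c_a\vp + \vp(D_a\cdot,\cdot,\cdot) + \vp(\cdot,D_a\cdot,\cdot) + \vp(\cdot,\cdot,D_a\cdot)$; reading off the $\vp$-component gives $c_a$, and its sign as a function of $a$ yields the trichotomy shrinking/steady/expanding with the claimed breakpoint $a=1$. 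The ERP assertion at $a=1$ is then the single algebraic identity $d\tau = \tfrac16|\tau|^2\vp + \tfrac16\ast(\tau\wedge\tau)$ from Theorem \ref{Bineq-intro}, checked directly; existence of a compact quotient follows from a rationality/lattice argument on $G_1$ (e.g.\ Mal'cev-type criterion adapted to the solvable case, using that $\ad_A$ at $a=1$ has suitable eigenvalue structure, perhaps $\RR$-split with rational characteristic polynomial or unimodular).

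For the finite-time singularity and the convergence rate, I would feed $c_a < 0$ into the general structure theory of the Laplacian flow on solitons from \cite{LF}: a shrinking soliton gives $\vp(t) = (1 - \beta t)^{\kappa}\,\varphi_\ast(t)$ for an explicit $\kappa$ (coming from the scaling behaviour of $\Delta$ under $\vp \mapsto \lambda\vp$, namely $\Delta(\lambda\vp) = \lambda^{1/3}\Delta\vp$ for the Hodge Laplacian associated to the $G_2$-metric, which is the source of the exponent $\unc \le \alpha < \unm$), with the scale factor hitting zero at $T = 1/\beta < \infty$; matching $\beta$ and $\kappa$ to $c_a$ gives $\alpha = \alpha(a) \in [\unc, \unm)$ as claimed, with the endpoint $\unc$ at $a = \unc$. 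Finally, monotonicity of $F = R^2/|\Ricci|^2$ on $[\unc,\infty)$ is a one-variable calculus exercise once $R(a)$ and $|\Ricci(a)|^2$ are computed: I would write $F(a)$ as an explicit rational function of $a$, differentiate, and check $F'(a) < 0$; the boundary and special values $F(\unc) = \tfrac{81}{17}$, $F(1) = 3$, $\lim_{a\to\infty}F(a) = 1$ then just evaluate the formula, with $F(1)=3$ consistent with the ERP/Bryant equality case. The main obstacle I anticipate is not any single step but the bookkeeping: choosing the coframe and the parametrization of $\ad_A$ so that $d\vp = 0$ holds identically in $a$ while keeping $\Ricci$, $\tau$, $\Delta\vp$ and the derivation $D_a$ simultaneously tractable — in particular verifying that the \emph{same} vector field / derivation realizes the soliton structure across the whole family (so that only $c_a$ varies), and confirming that the ERP condition is met at exactly one parameter value rather than on a subinterval.
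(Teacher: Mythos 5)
Your overall strategy coincides with the paper's: the groups are $G_a=\RR e_7\ltimes H$ with $H$ the complex Heisenberg group, $\ad e_7|_{\hg}=\Diag(a,a,\unm-a,\unm-a,\unm,\unm)$, and $\vp=\omega\wedge e^7+\rho^+$; one checks $d\vp=0$, computes $\tau$, $R$, $\Ricci$ and hence $Q_{d\tau}$ via \eqref{ricop2}, finds $Q_{d\tau}=c(a)I+D$ with $D\in\Der(\ggo)$, and reads off the shrinking/steady/expanding trichotomy from the sign of $c(a)=\tfrac43+\tfrac43 a-\tfrac83 a^2$, the ERP identity at $a=1$ from \eqref{quad-Q}, and the monotonicity and special values of $F$ from the explicit rational function of $a$. (One detail you omit: $(G_a,\vp)\simeq(G_{\unm-a},\vp)$, so pairwise non-isomorphism only holds after restricting to $a\geq\unc$, and non-equivalence of distinct members uses complete solvability plus Alekseevskii's theorem.) Two points in your sketch do not survive contact with the actual example, one of them seriously.

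The serious one is the compact quotient of $G_1$. You propose a Mal'cev-type rationality/lattice criterion on $G_1$ itself, possibly using unimodularity. This cannot work: $\tr\ad e_7|_{\hg}=2\neq 0$ at $a=1$, so $G_1$ is not unimodular and admits no lattice. The paper's route (Example \ref{Bryant3}) is to recognize $(G_1,\vp)$, up to scaling, as Bryant's homogeneous space $\Sl_2(\CC)\ltimes\CC^2/\SU(2)$ by exhibiting a solvable subgroup $S\subset\Sl_2(\CC)\ltimes\CC^2$ acting simply transitively on $G/K$ whose Lie algebra is isomorphic to $\ggo_1$; the compact locally equivalent quotient then comes from a torsion-free cocompact discrete subgroup $\Gamma\subset\Sl_2(\CC)\ltimes\CC^2\subset\Aut(G_1,\vp)$, not from a lattice in $G_1$. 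Without this identification (or some substitute producing a cocompact discrete subgroup of the full automorphism group), the second bullet of the theorem is unproved. The minor point is the convergence rate: the pure scaling behaviour $\Delta(\lambda\vp)=\lambda^{1/3}\Delta\vp$ by itself produces the universal exponent $3/2$ in the scale factor $(-2ct+1)^{3/2}$ of Proposition \ref{hom-sol}; the $a$-dependent exponents $\unc\leq\alpha<\unm$ arise only after composing with the diffeomorphisms $e^{s(t)D}$, i.e.\ from the eigenvalues of the derivation $D$ relative to $c(a)$, so your attribution of the range $[\unc,\unm)$ to the $\lambda^{1/3}$ scaling alone is not correct, though the computation you would be forced to do would produce the right answer.
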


The way these examples were actually found is through a general construction of homogeneous $G_2$-structures as extensions of $\SU(3)$-structures, which will be developed in detail in a forthcoming paper (see \cite{Mnr,FinRff1,FinRff2,FrnFinRff} for other particular cases of such construction).  Some additional geometric information on this family is that $G_a$ is a Ricci soliton if and only if $a=1$, $G_a$ has $\Ricci\leq 0$ if and only if $a\in\left[\unc,1\right]$ and has always mixed sectional curvature.  We do not know what is the supremum of $F$ in the homogeneous case, nor how special its maximal points are (if they exist at all).

Furthermore, we obtain the following results and examples:

\begin{itemize}
\item The homogeneous example of an ERP $G_2$-structure given in \cite[Example 1]{Bry} is a steady LS, equivalent to $G_1$ above (see Examples \ref{Bryant2} and \ref{Bryant3}).

\item A new example of a left-invariant $G_2$-structure on a solvable Lie group which is ERP and also a steady LS (see Example \ref{tripleA}).

\item There is no positive lower bound for $F$ and $F$ diverges at a flat $G_2$-structure (see Examples \ref{muAex} and \ref{muAex2}).
\end{itemize}

\vs \noindent {\it Acknowledgements.} This material is based upon work supported by the NSF under Grant No.\ DMS-1440140 while the author was in residence at the MSRI in Berkeley, California, during the Spring 2016 semester.

\section{Preliminaries}\label{prelim}

We give in this section a short overview on $G_2$-structures and some of the linear algebra involved, see \cite{Bry,Lty} for much more complete treatments.

\subsection{$G_2$-structures}
A $G_2$-{\it structure} on a $7$-dimensional differentiable manifold $M$ is a differential $3$-form $\vp\in\Omega^3M$ such that $\vp_p$ is {\it positive} on $T_pM$ for any $p\in M$, that is, $\vp_p$ can be written as
\begin{equation}\label{phican}
\vp_p=\omega\wedge e^7+\rho^+=e^{127}+e^{347}+e^{567}+e^{135}-e^{146}-e^{236}-e^{245},
\end{equation}
where
$$
\omega:=e^{12}+e^{34}+e^{56}, \qquad \rho^+:=e^{135}-e^{146}-e^{236}-e^{245},
$$
with respect to some basis $\{ e_1,\dots,e_7\}$ of $T_pM$.  The usual notation $e^{ij\cdots}$ to indicate $e^i\wedge e^j\wedge\cdots$ will be freely used throughout the paper.  Each $G_2$-structure $\vp$ defines a Riemannian metric $g$ on $M$ and an orientation $\vol\in\Omega^7M$ (unique up to scaling) such that
$$
g(X,Y)\vol = \frac{1}{6} i_X(\vp)\wedge i_Y(\vp)\wedge\vp, \qquad\forall X,Y\in\mathfrak{X}(M),
$$
and we denote by $\ast:\Omega M\longrightarrow\Omega M$ the corresponding Hodge star operator.  Thus $\vp$ also determines the {\it Hodge Laplacian operator}
$\Delta:\Omega^kM\longrightarrow\Omega^kM$, $\Delta:=d^*d+dd^*$, where $d^*:\Omega^{k+1}M\longrightarrow\Omega^kM$, $d^*=(-1)^{k+1}\ast d\ast$, is the adjoint of $d$.  Let $\ricci$ denote the Ricci tensor of $(M,g)$.

The {\it torsion forms} of a $G_2$-structure $\vp$ on $M$ are the components of the {\it intrinsic torsion} $\nabla\vp$, where $\nabla$ is the Levi-Civita connection of the metric $g$.  They can be defined as the unique differential forms $\tau_i\in\Omega^iM$, $i=0,1,2,3$, such that
\begin{equation}\label{dphi}
d\vp=\tau_0\ast\vp+3\tau_1\wedge\vp+\ast\tau_3, \qquad d\ast\vp=4\tau_1\wedge\ast\vp+\tau_2\wedge\vp.
\end{equation}
Some special classes of $G_2$-structures are defined or characterized as follows:

\begin{itemize}
\item {\it parallel} or {\it torsion-free}: $d\vp=0$ and $d\ast\vp=0$, or equivalently, $\nabla\vp=0$ (for $M$ compact, this is equivalent to $\vp$ {\it harmonic}: $\Delta\vp=0$);
\item {\it closed} or {\it calibrated}: $d\vp=0$;
\item {\it eigenform}: $\Delta\vp=c\vp$ for some $c\in\RR$;
\item {\it Einstein}: $\ricci=cg$ for some $c\in\RR$;
\item {\it Laplacian soliton} (LS for short): $\Delta\vp = c\vp + \lca_X\vp$ for some $c\in\RR$ and $X\in\mathfrak{X}(M)$ (called {\it expanding}, {\it steady} or {\it shrinking} if $c>0$, $c=0$ or $c<0$, respectively);
\item {\it Ricci soliton}: $\ricci = cg + \lca_Xg$ for some $c\in\RR$ and $X\in\mathfrak{X}(M)$.
\end{itemize}

The {\it Laplacian flow} for a family $\vp(t)$ of $G_2$-structures on $M$ is the evolution equation
$$
\dpar\vp(t) = \Delta\vp(t),
$$
introduced by Bryant in \cite[Section 6]{Bry} (see also \cite{BryXu,Krg,Grg,Lty,Lty2,Lty3}).  It follows from the invariance by diffeomorphisms of the flow that a solution $\vp(t)$ starting at a $G_2$-structure $\vp$ will be {\it self-similar}, in the sense that $\vp(t)=c(t)f(t)^*\vp$, for some $c(t)\in\RR^*$ and $f(t)\in\Diff(M)$, if and only if $\vp$ is a Laplacian soliton.  Analogously, {\it Ricci solitons} correspond to self-similar solutions to the well-known {\it Ricci flow} $\dpar g=-2\ricci(g)$.

\subsection{Some linear algebra}
We next show how to translate some of the $3$-form analysis into linear operators of the tangent space.  For a given point $p\in M$, let us fix a positive $3$-form $\vp$ on $\pg:=T_pM$, a real vector space of dimension $7$, and denote by $\ip$ the inner product on $\pg$ defined by $\vp$.  Since the orbit $\Gl(\pg)\cdot\vp$ is open in $\Lambda^3\pg^*$, we have that its tangent space at $\vp$ satisfies
\begin{equation}\label{nondeg}
\theta(\glg(\pg))\vp = \displaystyle{\Lambda^3}\pg^*,
\end{equation}
where $\theta:\glg(\pg)\longrightarrow \End(\Lambda^3\pg^*)$ is the representation obtained as the derivative of the natural left $\Gl(\pg)$-action on $3$-forms $h\cdot\psi=\psi(h^{-1}\cdot,h^{-1}\cdot,h^{-1}\cdot)$, that is,
$$
\theta(A)\psi=-\psi(A\cdot,\cdot,\cdot)-\psi(\cdot,A\cdot,\cdot)-\psi(\cdot,\cdot,A\cdot), \qquad\forall A\in\glg(\pg),\quad\psi\in\Lambda^3\pg^*.
$$
The Lie algebra of the stabilizer subgroup $G_2=\Gl(\pg)_\vp$ is given by
$$
\ggo_2:=\{ A\in\glg(\pg):\theta(A)\vp=0\}.
$$
We consider the following $G_2$-invariant decompositions, which are orthogonal relative to the inner product on $\glg(\pg)$ determined by $\ip$ (i.e.\ $\tr{AB^t}$):
\begin{equation}\label{g2-dec}
\begin{array}{c}
\glg(\pg)=\ggo_2\oplus\qg, \qquad \qg=\qg_1\oplus\qg_7\oplus\qg_{27}, \\ \\
\sog(\pg)=\ggo_2\oplus\qg_7, \qquad \sym(\pg)=\qg_1\oplus\qg_{27}, \qquad \qg_{27}=\sym_0(\pg),
\end{array}
\end{equation}
where $\sog(\pg)$ and $\sym(\pg)$ are the spaces of skew-symmetric and symmetric linear maps of $\pg$ with respect to $\ip$, respectively.  Furthermore,
$\sym_0(\pg):=\{ A\in\sym(\pg):\tr{A}=0\}$, $\qg_1=\RR I$ is the one-dimensional trivial representation of $G_2$, $\qg_7$ the standard representation and $\qg_{27}$ the other fundamental representation of $G_2$.  By setting $\Lambda^3_i\pg^*:=\theta(\qg_i)\vp$, for $i=1,7,27$, one obtains the decomposition
$$
\Lambda^3\pg^* = \Lambda^3_{1}\pg^*\oplus\Lambda^3_{7}\pg^*\oplus\Lambda^3_{27}\pg^*,
$$
of the space of $3$-forms in irreducible $G_2$-representations.

It follows that $\theta(\qg)\vp=\Lambda^3\pg^*$ (see \eqref{nondeg}); moreover, for every $3$-form $\psi\in\Lambda^3\pg^*$, there exists a unique operator $Q_\psi\in\qg$ such that
\begin{equation}\label{nondeg2}
\psi=\theta(Q_\psi)\vp.
\end{equation}
If we use $\ip$ to identify $\sym(\pg)$ with the space $S^2\pg^*$ of symmetric bilinear forms, then the linear isomorphism
$
\iop:S^2\pg^*\equiv\sym(\pg)\longrightarrow \Lambda^3_{1}\pg^*\oplus\Lambda^3_{27}\pg^*,
$
defined in \cite[(2.17)]{Bry} (and in \cite[(2.6)]{Lty} with a factor of $1/2$) is given by
\begin{equation}\label{iop}
\iop(A)=-2\theta(A)\vp; \qquad\mbox{in particular}, \quad \iop(Q_\psi)=-2\psi.
\end{equation}
On the other hand, the linear map $\jop:\Lambda^3\pg^*\longrightarrow\sym(\pg)$ defined in \cite[(2.18)]{Bry} (and in \cite{Lty}) satisfies that $\jop(\iop(h))=8h+4\tr(h)\ip$ for any $h\in S^2\pg^*$.  It follows from \eqref{iop} that, in terms of the $Q$-operators, $\jop$ is defined by
\begin{equation}\label{jop}
\jop(\psi)=-2\tr(Q_\psi)I-4Q_\psi, \qquad\forall\psi\in\Lambda^3_{1}\pg^*\oplus\Lambda^3_{27}\pg^*.
\end{equation}
Recall that $\jop$ vanishes on $\Lambda^3_{7}\pg^*$ and it is an isomorphism when restricted to $\Lambda^3_{1}\pg^*\oplus\Lambda^3_{27}\pg^*$.

\subsection{Homogeneous $G_2$-structures}\label{homog}
We refer to \cite{LF} for a more complete study of homogeneous $G_2$-structures.  A $7$-manifold endowed with a $G_2$-structure $(M,\vp)$ is said to be {\it homogeneous} if the Lie group of all its automorphisms (or symmetries),
$$
\Aut(M,\vp):=\{ f\in\Diff(M):f^*\vp=\vp\}\subset\Iso(M,g),
$$
acts transitively on $M$.  Each Lie subgroup $G\subset\Aut(M,\vp)$ which is transitive on $M$ gives rise to a presentation of $M$ as a {\it homogeneous space} $G/K$, where $K$ is the isotropy subgroup of $G$ at some point $o\in M$.  In that case, $\vp$ becomes a $G$-invariant $G_2$-structure on the homogeneous space $M=G/K$.  In the presence of a {\it reductive} decomposition $\ggo=\kg\oplus\pg$ (i.e.\ $\Ad(K)\pg\subset\pg$) for the homogeneous space $G/K$, there is a natural correspondence between the set of all $G$-invariant $G_2$-structures on $G/K$ and the set of all positive $3$-forms on $\pg\equiv T_oG/K$ which are $\Ad(K)$-invariant.  More in general, $\left(\Omega^k(G/K)\right)^G\equiv\left(\Lambda^k\pg^*\right)^K$.

A diffeomorphism $f$ of a homogeneous space $G/K$ is said to be {\it equivariant} if $f(hK)=\tilde{f}(h)K$ for all $h\in G$ for some $\tilde{f}\in\Aut(G)$ such that $\tilde{f}(K)=K$.  Let $\Aut(G/K)$ denote the subgroup of $\Diff(G/K)$ of all equivariant diffeomorphisms of $G/K$.  If $G$ is simply connected and $K$ connected, then each derivation $D\in\Der(\ggo)$ such that $D\kg\subset\kg$ defines a one-parameter subgroup $e^{tD}\in\Aut(G)\equiv\Aut(\ggo)$ taking $K$ onto $K$.  The vector field on $G/K$ defined by the corresponding one-parameter subgroup of $\Aut(G/K)$ will be denoted by $X_D$.  It is easy to see that for any $G$-invariant $k$-form $\psi$ on $G/K$, one has $\lca_{X_D}\psi=-\theta(D)\psi$.

\section{Quadratic $G_2$-structures}

In this section, we consider some distinguished classes of closed $G_2$-structures introduced and studied by Bryant in \cite{Bry}, which have notable properties in the compact case.  Our main interest here is in making a comparison with the (not necessarily compact) homogeneous case, in the search of parallelisms.  In this respect, a first natural question is

\begin{question}\label{comphom}
  Does there exist a homogeneous closed (non-parallel) $G_2$-structure on a compact manifold?
\end{question}

We note that the canonical $G_2$-structure on $\RR^7$ descends to a parallel $G_2$-structure on the torus $T^7$, which is clearly homogeneous.  The classification of compact homogeneous spaces admitting an invariant $G_2$-structure has been achieved in \cite{Rdg,VanMnr}, though the above question remains open.

Recall that a $G_2$-structure on a $7$-manifold $M$ is closed if and only if the torsion forms $\tau_0$, $\tau_1$ and $\tau_3$ all vanish (see \eqref{dphi}).  The only torsion form that survives is therefore the $2$-form $\tau_2$, which will be denoted by $\tau$ in the closed case.  It follows that
\begin{equation}\label{tauphi}
d\vp=0, \qquad \tau=-\ast d\ast\vp, \qquad d\ast\vp=\tau\wedge\vp, \qquad d\tau=\Delta\vp.
\end{equation}

The Ricci tensor and scalar curvature of a closed $G_2$-structure $\vp$ (see \cite[4.37]{Bry}) are respectively given by
\begin{equation}\label{ric}
\ricci = \unc|\tau|^2g - \unc\jop\left(d\tau-\unm\ast(\tau\wedge\tau)\right), \qquad R = -\unm|\tau|^2.
\end{equation}
In particular, a closed $G_2$-structure is parallel if and only if it is Ricci flat, if and only if $R=0$.  It follows from \eqref{jop} that the Ricci operator equals
\begin{equation}\label{ricop}
\Ricci = \left(\unc|\tau|^2+\unm\tr{Q_{d\tau}}-\unc\tr{Q_{\ast(\tau\wedge\tau)}}\right)I + Q_{d\tau} -\unm Q_{\ast(\tau\wedge\tau)},
\end{equation}
where $Q_{d\tau}$ and $Q_{\ast(\tau\wedge\tau)}$ are the symmetric operators defined as in \eqref{nondeg2} for the $3$-forms $d\tau$ and $\ast(\tau\wedge\tau)$, respectively.  Another formula for $\Ricci$ was given in \cite[Proposition 2.2]{LF}:
\begin{equation}\label{ricop2}
\Ricci = -\tfrac{1}{6}|\tau|^2I + Q_{d\tau} -\unm\tau^2,
\end{equation}
where $\tau\in\sog(\pg)$ also denotes the skew-symmetric linear map determined by the $2$-form $\tau$ (i.e.\ $\tau=\la\tau\cdot,\cdot\ra$).  This implies that $\tr{Q_{d\tau}}=-\tfrac{1}{3}|\tau|^2$ since $\tr{\tau^2}=-2|\tau|^2$, so by using \eqref{ricop} and \eqref{ricop2} we obtain that
\begin{equation}\label{att}
\tr{Q_{\ast(\tau\wedge\tau)}}=\tfrac{1}{3}|\tau|^2, \qquad \Ricci=Q_{d\tau}-\unm Q_{\ast(\tau\wedge\tau)},
\end{equation}
\begin{equation}\label{att2}
Q_{\ast(\tau\wedge\tau)}=\tfrac{1}{3}|\tau|^2I+\tau^2, \qquad \ast(\tau\wedge\tau)=-|\tau|^2\vp+\theta(\tau^2)\vp.
\end{equation}

\begin{definition}
A closed $G_2$-structure is said to be {\it quadratic} if
$$
d\tau = \frac{1}{7}(1+q)|\tau|^2\vp + q\ast(\tau\wedge\tau), \qquad \mbox{for some}\quad q\in\RR.
$$
\end{definition}

The notion was introduced in \cite[Remark 14]{Bry} as the most general way in which $d\tau$ can quadratically depend on $\tau$ (note that parallel structures are trivially quadratic for any $q$).  This is also evidenced in the first of the following two equivalent conditions to $\vp$ being quadratic, which can be deduced from \eqref{att} and \eqref{att2}:
\begin{equation}\label{quad-Q}
Q_{d\tau}=\tfrac{6q-1}{21}|\tau|^2I+q\tau^2;
\end{equation}
and
\begin{equation}\label{quad-Ric}
\Ricci=\tfrac{4q-3}{14}|\tau|^2I+(q-\unm)\tau^2.
\end{equation}

It is proved in \cite[Corollary 2]{Bry} that $g$ is an Einstein metric if and only if $\vp$ is quadratic for $q=\unm$, in which case $\ricci=-\frac{1}{14}|\tau|^2g$.  A quadratic $G_2$-structure with $q=\frac{1}{6}$, i.e.
$$
d\tau = \frac{1}{6}|\tau|^2\vp + \frac{1}{6}\ast(\tau\wedge\tau),
$$
is called {\it extremally Ricci-pinched} (ERP for short).  This concept was defined in \cite[Remark 13]{Bry} and owes its name to the following result.

\begin{theorem}\cite[Corollary 3]{Bry}\label{Bineq}
If $\vp$ is a closed $G_2$-structure on a compact manifold $M$, then
$$
\int_M R^2 \ast 1 \leq 3\int_M |\Ricci|^2 \ast 1,
$$
and equality holds if and only if $\vp$ is extremally Ricci pinched.
\end{theorem}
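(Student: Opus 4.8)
\emph{Strategy.} I would work throughout with the torsion $2$-form $\tau$ and $f:=|\tau|^2$, and reduce the stated inequality to a single Cauchy--Schwarz estimate against one Stokes identity. By \eqref{ric} one has $R=-\unm f$ and the traceless Ricci operator $\Ricci_0:=\Ricci-\tfrac{R}{7}I=-\unc\jop(\sigma)_0$, where $\sigma:=d\tau-\unm\ast(\tau\wedge\tau)$ and $(\cdot)_i$ denotes the $\Lambda^3_i\pg^*$-component of Section~2. The first task is to pin down the constants behind \eqref{g2-dec}--\eqref{jop}: recall $|\vp|^2=7$, $\theta(I)\vp=-3\vp$, and (as one checks, or reads off from \cite{Bry,Lty}) that $\jop$ is the adjoint of $\iop$; together with \eqref{iop}--\eqref{jop} this yields $|\iop(h)|^2=8|h|^2+4(\tr h)^2$, hence $|\jop(\psi)|^2=8|\psi|^2$ on $\Lambda^3_{27}\pg^*$, $\jop(\theta(A)\vp)=-4A$ for $A\in\sym_0(\pg)$, $\langle\jop(\psi),A\rangle=-2\langle\psi,\theta(A)\vp\rangle$ for $A\in\sym(\pg)$, and $\jop(\vp)=6I$. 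Comparing $\tr\Ricci=R=-\unm f$ with the trace of \eqref{ric} then forces the pointwise identity $\langle d\tau,\vp\rangle=f$, i.e.\ $(d\tau)_1=\tfrac f7\vp$; and \eqref{att2} rewrites as $\ast(\tau\wedge\tau)=-\tfrac f7\vp+\theta((\tau^2)_0)\vp$ (using $\tr\tau^2=-2f$ and $\theta(I)\vp=-3\vp$), so $(\ast(\tau\wedge\tau))_1=-\tfrac f7\vp$ and $(\ast(\tau\wedge\tau))_{27}=\theta((\tau^2)_0)\vp$. Substituting into $\Ricci_0=-\unc\jop(\sigma)_0$, the $\Lambda^3_1\pg^*$-parts of $\sigma$ are pure trace and drop out, leaving $\Ricci_0=-\unc\bigl(\jop((d\tau)_{27})+2(\tau^2)_0\bigr)$; expanding $|\Ricci|^2=\tfrac17R^2+|\Ricci_0|^2$ with the identities above then turns $\int_M R^2\le3\int_M|\Ricci|^2$ into the equivalent statement
\[
\tfrac32\!\int_M\!|(d\tau)_{27}|^2+\tfrac34\!\int_M\!|(\tau^2)_0|^2\ \ge\ \tfrac32\!\int_M\!\langle(d\tau)_{27},\theta((\tau^2)_0)\vp\rangle+\tfrac17\!\int_M\! f^2.
\]

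\emph{Two inputs.} Two further facts make this collapse. First, since $\vp$ is closed the only torsion is $\tau=\tau_2$, which as a skew-symmetric operator lies in $\ggo_2$; since a regular element of $\ggo_2$ is $G_2$-conjugate to the skew operator with rotation angles $a,b,a{+}b$ on $\RR^2\oplus\RR^2\oplus\RR^2$ and $0$ on $\RR$ (the $7$-dimensional $\ggo_2$-module having weights $0,\pm\beta_1,\pm\beta_2,\pm\beta_3$ with $\beta_1=\beta_2+\beta_3$), the elementary identity $2\bigl((a{+}b)^4+a^4+b^4\bigr)=\bigl(2a^2+2ab+2b^2\bigr)^2$ gives $\tr(\tau^4)=|\tau|^4$ on a dense set, hence everywhere; consequently $|(\tau^2)_0|^2=\tr(\tau^4)-\tfrac17(\tr\tau^2)^2=\tfrac37 f^2$ and $|\theta((\tau^2)_0)\vp|^2=\tfrac67 f^2$ pointwise. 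Second, $d(\tau\wedge\tau)=2\,d\tau\wedge\tau$ together with $\int_M d(\tau\wedge\tau\wedge\tau)=0$ gives $\int_M d\tau\wedge\tau\wedge\tau=0$, i.e.\ $\int_M\langle d\tau,\ast(\tau\wedge\tau)\rangle=0$; decomposing into $G_2$-types and using $(d\tau)_1=\tfrac f7\vp$, $(\ast(\tau\wedge\tau))_1=-\tfrac f7\vp$, $(\ast(\tau\wedge\tau))_{27}=\theta((\tau^2)_0)\vp$, this reads $\int_M\langle(d\tau)_{27},\theta((\tau^2)_0)\vp\rangle=\tfrac17\int_M f^2$. (One also notes $(d\tau)_7=0$, from $d(\tau\wedge\vp)=d(d\ast\vp)=0$, needed only for the equality case.)

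\emph{Conclusion, equality, and the hard step.} Substituting the two displayed identities into the inequality above, it collapses to the single assertion $\int_M|(d\tau)_{27}|^2\ge\tfrac1{42}\int_M f^2$, which now follows from Cauchy--Schwarz:
\[
\tfrac17\!\int_M\! f^2=\int_M\langle(d\tau)_{27},\theta((\tau^2)_0)\vp\rangle\le\Bigl(\int_M|(d\tau)_{27}|^2\Bigr)^{\!1/2}\Bigl(\tfrac67\!\int_M\! f^2\Bigr)^{\!1/2}.
\]
Equality in the theorem forces equality here, so $(d\tau)_{27}=\lambda\,\theta((\tau^2)_0)\vp$ for a real constant $\lambda$; feeding this back into the Stokes identity gives $\lambda=\tfrac16$ (unless $\tau\equiv0$, i.e.\ $\vp$ parallel, which is trivially ERP), and together with $(d\tau)_1=\tfrac f7\vp$ and $(d\tau)_7=0$ this is precisely $d\tau=\tfrac16|\tau|^2\vp+\tfrac16\ast(\tau\wedge\tau)$ --- so $\vp$ is ERP, and conversely an ERP structure makes every inequality above an equality. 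The step I expect to be the main obstacle is the representation-theoretic bookkeeping of the first paragraph: nailing down the adjointness $\jop=\iop^\ast$ and the resulting constants, and then checking that after substitution the curvature terms really do collapse onto the displayed inequality with the sharp coefficients --- so that nothing spurious survives and the equality locus is exactly the ERP equation.
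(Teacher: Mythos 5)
The paper does not actually prove this statement: Theorem \ref{Bineq} is quoted verbatim from \cite[Corollary 3]{Bry}, so the only thing to check your argument against is the linear-algebra package of Section 2 and the identities \eqref{ric}--\eqref{att2}. Against those, your proof is correct, and it is essentially a reconstruction of Bryant's original argument. The bookkeeping you flag as the main risk does close up with the paper's normalizations: from \eqref{iop}, \eqref{jop} and $Q_{\theta(A)\vp}=A$ one gets $\langle\theta(A)\vp,\theta(B)\vp\rangle=\tr A\,\tr B+2\langle A,B\rangle$ on $\sym(\pg)$ (check it on $A=B=I$ using $|\vp|^2=7$, $\theta(I)\vp=-3\vp$), hence $\jop=\iop^*$ and $|\theta(A)\vp|^2=2|A|^2$ for $A\in\sym_0(\pg)$; the identity $(d\tau)_1=\tfrac{1}{7}|\tau|^2\vp$ is exactly the paper's $\tr Q_{d\tau}=-\tfrac13|\tau|^2$; and $\tr(\tau^4)=|\tau|^4$ for $\tau\in\ggo_2$ is Newton's identity $p_4=\tfrac12 p_2^2$ for rotation angles summing to zero, giving $|(\tau^2)_0|^2=\tfrac37|\tau|^4$ and $|\theta((\tau^2)_0)\vp|^2=\tfrac67|\tau|^4$. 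With $|\Ricci|^2=\tfrac17R^2+|\Ricci_0|^2$ and $\Ricci_0=(Q_{d\tau})_0-\unm(\tau^2)_0$ (from \eqref{att}), the inequality does reduce to $\int|(d\tau)_{27}|^2\ge\tfrac1{42}\int|\tau|^4$ with exactly your coefficients, and the Stokes identity $\int d\tau\wedge\tau\wedge\tau=0$ plus Cauchy--Schwarz finishes it; the equality analysis, including $\lambda=\tfrac16$ and the reassembly into the ERP equation via $(d\tau)_1=\tfrac17|\tau|^2\vp$, $(d\tau)_7=0$, $(\ast(\tau\wedge\tau))_1=-\tfrac17|\tau|^2\vp$, is also right.

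Two small points worth making explicit if you write this up. First, the fact that the torsion $2$-form of a closed structure lies in $\Lambda^2_{14}\cong\ggo_2$ (which you need for $\tr\tau^4=|\tau|^4$) is part of the standard definition of $\tau_2$ in \eqref{dphi} but is nowhere recorded in this paper's preliminaries, so it should be cited from \cite{Bry}. Second, compactness is used only in the single integration by parts $\int_M d(\tau\wedge\tau\wedge\tau)=0$ (and in $(d\tau)_7=0$ via $d(d\ast\vp)=0$, which is pointwise anyway); this is consistent with the paper's later observation that the pointwise ratio $R^2/|\Ricci|^2$ can exceed $3$ on noncompact homogeneous examples (Example \ref{Htype-ineq}), since there the Stokes identity genuinely fails.
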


Note the factor of $3$ on the right hand side instead of the a priori factor of $7$ provided by the Cauchy-Schwartz inequality.  In particular, in the compact case, a closed $G_2$-structure is Einstein if and only if it is parallel (see also \cite{ClyIvn}); moreover, it is also proved in \cite{Bry} that for any ERP $G_2$-structure on a compact manifold, $\Ricci$ has only two eigenvalues: $-\frac{1}{6}|\tau|^2$ of multiplicity $3$ and $0$ of multiplicity $4$.

The following question naturally arises:

\begin{question}\label{ineq}
Does the inequality $\frac{R^2}{|\Ricci|^2}\leq 3$ hold for any closed $G_2$-structure in the homogeneous case?
\end{question}

Recall that this holds after integrating on $M$ in the compact case by Theorem \ref{Bineq}.  In Example \ref{Htype-ineq}, we give a negative answer to this question, and Example \ref{muAex} shows that there is no lower positive bound for  $\frac{R^2}{|\Ricci|^2}$ on homogeneous closed $G_2$-structures.

It is shown in \cite[4.66]{Bry} that the only possible value of $q$ for a quadratic (non-parallel) $G_2$-structure on a compact manifold is $q=\frac{1}{6}$ (see also \cite[Proposition 9.1]{Lty} for an alternative proof of the impossibility for $q=0$).  In the homogeneous case, there are in principle three values that are possible.

\begin{lemma}\label{qhom}
If $(M,\vp)$ is a homogeneous, closed and quadratic (non-parallel) $G_2$-structure for some $q$, then the only possibilities for $q$ are:
\begin{itemize}
\item $q=0$ and $\vp$ is an eigenform, with $\Delta\vp=\frac{1}{7}|\tau|^2\vp$;
\item[ ]
\item $q=\unm$ and $\vp$ is Einstein, with $\ricci=-\frac{1}{14}|\tau|^2g$;
\item[ ]
\item $q=\frac{1}{6}$, i.e.\ $\vp$ is extremally Ricci pinched.
\end{itemize}
\end{lemma}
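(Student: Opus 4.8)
The plan is to exploit the tension between the quadratic condition \eqref{quad-Ric} for $\Ricci$ and the fact that, in the homogeneous setting, the trace of the Ricci operator is tightly constrained. Recall from \eqref{att} and \eqref{ricop2} that $\tr Q_{d\tau}=-\tfrac13|\tau|^2$ and $\tr\tau^2=-2|\tau|^2$, so taking the trace of \eqref{quad-Ric} gives $\operatorname{sc}=\tr\Ricci = \tfrac{4q-3}{14}\cdot 7|\tau|^2 - 2(q-\unm)|\tau|^2 = -\unm|\tau|^2$, which is just $R=-\unm|\tau|^2$ again — so the scalar curvature alone imposes no restriction on $q$. The extra input must come from a \emph{Bianchi-type} or \emph{variational} identity valid for homogeneous metrics: on any homogeneous space, the Ricci operator satisfies $\tr(\Ricci\cdot D_{\ggo})=0$ for the relevant unimodularity/mean-curvature data, and more to the point, for homogeneous closed $G_2$-structures there is (by \cite{LF}) a gradient-flow interpretation in which the quantity $\langle\Ricci,\tau^2\rangle$ or $\tr(Q_{d\tau}\tau^2)$ is controlled.

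Concretely, first I would compute $|\tau|^4$-homogeneous scalar invariants by pairing \eqref{quad-Q} and \eqref{quad-Ric} against $I$ and against $\tau^2$. Pairing \eqref{quad-Q} with $\tau^2$ and using $\tr(\tau^2)^2 = |\tau^2|^2$ (expressible via the eigenvalues of the skew map $\tau$) produces one polynomial relation in $q$ whose coefficients involve $|\tau^2|^2/|\tau|^4$; independently, pairing \eqref{quad-Ric} with itself gives $|\Ricci|^2$ as an explicit quadratic in $q$ times $|\tau|^4$, again with the same invariant $|\tau^2|^2/|\tau|^4$ appearing. The key structural fact I expect to need is that for a homogeneous closed $G_2$-structure there is an identity forcing $\tr(Q_{d\tau}\,\tau^2)$ — equivalently $\langle d\tau,\theta(\tau^2)\vp\rangle$ — to equal a specific multiple of $|\tau|^4$, coming from the fact that $d\tau=\Delta\vp$ and the homogeneity of $\vp$ (so that $\Delta\vp$ lies in the span of $\theta(\Der)\vp$ modulo $\lca_X\vp$ terms, which are $\theta(D)\vp$ for derivations). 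This "closing up" of the system is the step I expect to be the main obstacle: one must pin down exactly which scalar is universally determined, as opposed to merely bounded, and this is where the homogeneous hypothesis does its real work (the compact case uses integration by parts to kill the analogous term, yielding only $q=\tfrac16$; homogeneity is weaker, so three roots survive).

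Once the system closes, it should reduce to a single polynomial equation $P(q)=0$ of degree at most three (a cubic is natural, matching the three listed values), whose roots are forced to be $q\in\{0,\tfrac16,\tfrac12\}$ — i.e.\ $P(q)=c\,q(q-\tfrac16)(q-\tfrac12)$ for some constant $c\neq 0$ depending only on dimension $7$. I would verify the three roots by substitution: at $q=0$, \eqref{quad-Ric} or rather the defining relation $d\tau=\tfrac17|\tau|^2\vp$ together with $d\tau=\Delta\vp$ gives $\Delta\vp=\tfrac17|\tau|^2\vp$, the eigenform statement; at $q=\unm$, \eqref{quad-Ric} becomes $\Ricci = -\tfrac{1}{14}|\tau|^2 I$, which is \cite[Corollary 2]{Bry}'s Einstein case with the stated constant; and $q=\tfrac16$ is by definition the ERP condition. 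The final step is to rule out any \emph{other} root of $P$, which follows since $P$ has degree three and already has three distinct roots, so there are no further real solutions. I would close by remarking that the non-parallel hypothesis is used to guarantee $|\tau|\neq 0$, so that division by $|\tau|^2$ throughout is legitimate and $R=-\unm|\tau|^2<0$ rules out the degenerate case.
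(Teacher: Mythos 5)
Your plan never actually closes: the entire content of the lemma is the claim that $q\in\{0,\tfrac16,\tfrac12\}$, and the step that would prove this is exactly the one you flag as ``the main obstacle'' and leave unresolved. Worse, the route you propose for it cannot work. Pairing \eqref{quad-Q} and \eqref{quad-Ric} against $I$ and against $\tau^2$ only expresses invariants of $d\tau$ (such as $\tr(Q_{d\tau}\tau^2)$) as functions of $q$ and of invariants of $\tau$ (such as $|\tau|^4$ and $|\tau^2|^2$); since there is no independent universal formula for $\tr(Q_{d\tau}\tau^2)$, these pointwise algebraic identities never yield an equation in $q$ alone. The quadratic condition is a pointwise ansatz that is formally consistent for every $q$; the restriction on $q$ is a \emph{differential} consequence of it. This is what the paper uses: differentiating the quadratic relation (Bryant's identity (4.69)) produces an identity of $1$-forms in which the only non-algebraic term is a multiple of $d|\tau|^2$; homogeneity makes $|\tau|^2$ constant, so that term dies and one is left with $7q(2q-1)\ast(\tau\wedge\tau\wedge\tau)=0$.

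Note also that the expected shape of the conclusion in your proposal is structurally off: there is no cubic $P(q)=c\,q(q-\tfrac16)(q-\tfrac12)$. The trichotomy is $q=0$, or $q=\tfrac12$, or $\tau\wedge\tau\wedge\tau=0$, where the last alternative is a constraint on the structure rather than on $q$; it is only via a second, separate identity of Bryant ((4.66), the same one that settles the compact case) that $\tau\wedge\tau\wedge\tau=0$ forces $q=\tfrac16$ unless $\vp$ is parallel. Your verification that the three listed values do occur with the stated consequences (eigenform, Einstein with $\ricci=-\tfrac{1}{14}|\tau|^2g$, ERP) is correct but is just substitution into the definitions; it does not address exclusivity. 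As written, the proposal is a statement of intent rather than a proof.
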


\begin{proof}
Since in the homogeneous case the function $|\tau|^2$ is constant on $M$, it follows from \cite[4.69]{Bry} that
$$
7q(2q-1)\ast(\tau\wedge\tau\wedge\tau)=0.
$$
Thus either $q=0$, or $q=\unm$, or $\tau\wedge\tau\wedge\tau=0$.  The last alternative implies that either $q=\frac{1}{6}$ or $\vp$ is parallel by \cite[4.66]{Bry}, concluding the proof.
\end{proof}

It follows that for $q=0$ and $q=\unm$, the existence of homogeneous examples is equivalent to the following still open problems:

\begin{question}\label{qcero}
  Can a homogeneous closed (non-parallel) $G_2$-structure be an eigenform?
\end{question}

\begin{question}\label{Ehom}
  Is there any homogeneous closed (non-parallel) $G_2$-structure with Einstein associated metric?
\end{question}

A partial negative answer to Question \ref{Ehom} was given in \cite{FrnFinMnr2} in the case of left-invariant closed $G_2$-structures on solvable Lie groups.  This would be a definite answer if the Alekseevskii Conjecture, asserting that any homogeneous Einstein metric of negative scalar curvature is isometric to a solvmanifold, was known to be true.  Within the more general class of locally conformal closed $G_2$-structures, an example with Einstein associated metric was found on a solvable Lie group in \cite{FinRff1}.  On the other hand, some homogeneous closed $G_2$-structures do define Ricci soliton metrics (see \cite{FrnFinMnr,BF,LF,Ncl} and the examples below).

If one assumes that the answer to Question \ref{Ehom} is definitely no, then it is natural to wonder about what would be the extremally Ricci pinched $G_2$-structures in the homogeneous case:

\begin{question}\label{ineq2}
Is there a constant $C<7$ such that the inequality $\frac{R^2}{|\Ricci|^2}\leq C$ holds for any homogeneous closed $G_2$-structure?  What is the value of $C$ and its meaning?  Are the $G_2$-structures for which equality holds (if they exist at all) distinguished in some sense?
\end{question}

The last option $q=\frac{1}{6}$ in Lemma \ref{qhom} is indeed possible.  The following homogeneous example of an extremally Ricci pinched $G_2$-structure was given in \cite[Example 1]{Bry}.

\begin{example}\label{Bryant} ({\it Bryant's example of a homogeneous ERP $G_2$-structure})
Consider the homogeneous space $G/K=\Sl_2(\CC)\ltimes\CC^2/\SU(2)$ and its reductive decomposition
$$
\ggo=\slg_2(\CC)\ltimes\CC^2 = \sug(2)\oplus\pg, \qquad \pg=\herm(2)\oplus\CC^2.
$$
By taking the basis
$$
\left\{ Z_1=\left[\begin{smallmatrix} \im&0\\ 0&-\im \end{smallmatrix}\right],
Z_2=\left[\begin{smallmatrix} 0&-1\\ 1&0 \end{smallmatrix}\right],
Z_3=\left[\begin{smallmatrix} 0&\im\\ \im&0 \end{smallmatrix}\right]\right\},
$$
of $\sug(2)$, the basis of $\herm(2)$
$$
\left\{ e_1=\left[\begin{smallmatrix} -1&0\\ 0&1 \end{smallmatrix}\right],
e_2=\left[\begin{smallmatrix} 0&-\im\\ \im&0 \end{smallmatrix}\right],
e_3=\left[\begin{smallmatrix} 0&-1\\ -1&0 \end{smallmatrix}\right]\right\},
$$
and the basis
$$
\left\{ e_4=(1,0), e_5=(\im,0), e_6=(0,1), e_7=(0,-\im)\right\}
$$
of $\CC^2$, we obtain that the differential of $G$-invariant $1$-forms on $G/K$ satisfies that $de^1=de^2=de^3=0$ and
$$
\begin{array}{ll}
de^4=-e^{14}-e^{27}-e^{36}, &\qquad
de^5=-e^{15}-e^{26}+e^{37}, \\
de^6=e^{16}-e^{25}-e^{34}, &\qquad
de^7=e^{17}-e^{24}+e^{35}.
\end{array}
$$
It is easy to see that the $3$-form
$$
\vp=e^{123}+e^{145}+e^{167}+e^{246}-e^{257}-e^{347}-e^{356}\in\Lambda^3\pg^*,
$$
is $\Ad(K)$-invariant, by using that for any $Z\in\sug(2)$, $\ad{Z}|_{\herm(2)}=\ad_{\sug(2)}{Z}$ and $\ad{Z}|_{\CC^2}=Z$, so $\ad{Z}|_{\pg}\in\ggo_2$.  Thus $\vp$ defines a $G$-invariant $G_2$-structure on $G/K$.  A straightforward computation using the formula for $d$ above gives that
$$
d\vp=0, \quad \tau=6e^{45}-6e^{67}, \quad \Delta\vp=d\tau=12\vp-12e^{123}, \quad \ast(\tau\wedge\tau)=-72e^{123}.
$$
This implies that $d\tau=\frac{1}{6}|\tau|^2\vp+\frac{1}{6}\ast(\tau\wedge\tau)$, i.e.\ $(G/K,\vp)$ is extremally Ricci pinched.  Since $K$ is a maximal compact subgroup of $G$, the manifold $G/K$ is diffeomorphic to $\RR^7$.  However, by considering any torsion-free, cocompact and discrete subgroup of $\Sl_2(\CC)$ leaving invariant a lattice in $\CC^2$ (see \cite[Example 1]{Bry} for an explicit one) one obtains a discrete subgroup $\Gamma\subset G$ acting free and properly discontinuous on $G/K$ by automorphisms of $\vp$.  Thus the compact manifold $M=\Gamma\backslash G/K$ admits a $G_2$-structure, denoted also by $\vp$, such that $(M,\vp)$ is locally equivalent to $(G/K,\vp)$.   In particular, $(M,\vp)$ is also extremally Ricci pinched, though only locally homogeneous.
\end{example}

\section{Laplacian solitons}

The non-existence of shrinking or steady (non-parallel) Laplacian solitons was proved in \cite{Lin} in the compact case, though the following is still open:

\begin{question}\label{exp}
Are there compact and closed expanding Laplacian solitons?
\end{question}

Closed expanding Laplacian solitons have been found on solvable and nilpotent Lie groups in \cite{BF, LF, Ncl}, none of which is an eigenform.  However, the existence of the following classes of Laplacian solitons remained open:

\begin{question}\label{shr}
Are there closed shrinking Laplacian solitons?
\end{question}

\begin{question}\label{ste}
Are there closed steady Laplacian solitons (non-parallel)?
\end{question}

Homogeneous examples for these two questions will be given in Examples \ref{Bryant2} and \ref{Htype}.  In particular, the shrinking LSs in Example \ref{Htype} provide the first closed Laplacian flow solutions with a finite time singularity.  We note that the immortality of closed Laplacian flow solutions was proved on the class of left-invariant closed $G_2$-structures on solvable Lie groups with a codimension one abelian normal subgroup in \cite[Corollary 5.19]{LF}.  However, Example \ref{Htype} will show that this is not true for all solvmanifolds.

General properties of homogeneous Laplacian solitons were studied in \cite[Section 4]{LF}.  Let $G/K$ be a homogeneous space with $G$ simply connected, $K$ connected and compact, and consider the reductive decomposition $\ggo=\kg\oplus\pg$ such that $B(\kg,\pg)=0$, where $B$ is the Killing form of $\ggo$.

\begin{proposition}\cite[Proposition 4.5 and Remarks 4.6, 4.7]{LF}\label{hom-sol}
Let $\vp$ be a closed $G$-invariant $G_2$-structure on $G/K$ and consider the symmetric operator $Q_{d\tau}:\pg\longrightarrow\pg$ such that $\theta(Q_{d\tau})\vp=d\tau=\Delta\vp$.  Then, the following conditions are equivalent:
\begin{itemize}
\item[(i)] The $G$-invariant Laplacian flow solution starting at $\vp$ is given by $\vp(t)=c(t)f(t)^*\vp$ for some $c(t)\in\RR^*$ and $f(t)\in\Aut(G/K)$.
\item[ ]
\item[(ii)] $Q_{d\tau}=cI+\tfrac{1}{2}(D_\pg+D_\pg^t)$, for some $c\in\RR$, $D=\left[\begin{smallmatrix} 0&0\\ 0&D_\pg \end{smallmatrix}\right]\in\Der(\ggo)$.
\end{itemize}
In that case, $(G/K,\vp)$ is a Laplacian soliton with $\Delta\vp=-3c\vp-\lca_{X_D}\vp$ and the $G$-invariant Laplacian flow solution starting at $\vp$ is given by $\vp(t)=c(t)e^{s(t)D_\pg}\cdot\vp$, where $c(t):=(-2ct+1)^{3/2}$ and $s(t):=-\frac{1}{2c}\log(-2ct+1)$ (for $c=0$, set $s(t)=t$).
\end{proposition}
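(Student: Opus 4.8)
The plan is to reduce the Laplacian flow on $G$-invariant $G_2$-structures to an ODE on the finite-dimensional space $(\Lambda^3\pg^*)^K$, using two invariance properties of the operator $\vp\mapsto\Delta_\vp\vp$ on closed $G_2$-structures. First, its scaling weight: since $\vp\mapsto c\vp$ ($c>0$) rescales $g_\vp$ by $c^{2/3}$, the conformal behaviour of the Hodge star in dimension $7$ — equivalently, $\tau(c\vp)=c^{1/3}\tau(\vp)$ together with \eqref{tauphi} — gives $\Delta(c\vp)=c^{1/3}\Delta\vp$. Second, its naturality under diffeomorphisms: $\Delta_{f^*\vp}(f^*\vp)=f^*(\Delta_\vp\vp)$; specialising to the equivariant $f\in\Aut(G/K)$ induced by $e^{sD}\in\Aut(G)$ with $D=\left[\begin{smallmatrix}0&0\\0&D_\pg\end{smallmatrix}\right]$, and using $(\Omega^k(G/K))^G\equiv(\Lambda^k\pg^*)^K$, this reads $\Delta(e^{sD_\pg}\cdot\vp)=e^{sD_\pg}\cdot(\Delta\vp)$ on $G$-invariant closed $G_2$-structures. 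I will also use freely that $\theta(I)\psi=-3\psi$ on $\Lambda^3\pg^*$, that $\theta(\ggo_2)\vp=0$ (so the $Q_\psi\in\qg$ with $\theta(Q_\psi)\vp=\psi$ is unique), that $\lca_{X_D}\vp=-\theta(D_\pg)\vp$ (Section \ref{homog}), that $\Ad(e^{sD_\pg})D_\pg=D_\pg$, and that $Q_{d\tau}\in\sym(\pg)$.

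\emph{(i)$\Rightarrow$(ii).} Replacing $f(t)$ by $f(0)^{-1}\circ f(t)$ and $c(t)$ by $c(t)/c(0)$, assume $f(0)=\mathrm{id}$, $c(0)=1$. Differentiating $\vp(t)=c(t)f(t)^*\vp$ at $t=0$ and using $\dpar\vp(0)=\Delta\vp$ gives $\Delta\vp=\dot c(0)\vp+\lca_Y\vp$ with $Y=\frac{d}{dt}\big|_0 f(t)$ an equivariant vector field; hence $Y=X_D$ for a derivation $D$ with $D\kg\subset\kg$, which we may take of the block-diagonal form in (ii) as in Section \ref{homog}. Then $\theta(Q_{d\tau})\vp=\Delta\vp=\theta\big(-\tfrac{\dot c(0)}{3}I-D_\pg\big)\vp$, so by uniqueness $Q_{d\tau}+\tfrac{\dot c(0)}{3}I+D_\pg\in\ggo_2$. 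Transposing this identity (using $\ggo_2\subset\sog(\pg)$ and $Q_{d\tau}^t=Q_{d\tau}$) and subtracting gives $D_\pg-D_\pg^t\in\ggo_2$, while adding gives $Q_{d\tau}=-\tfrac{\dot c(0)}{3}I-\tfrac12(D_\pg+D_\pg^t)$. Setting $c:=-\tfrac{\dot c(0)}{3}$ and replacing $D$ by $-D$ yields (ii), realised by a $D$ with $D_\pg-D_\pg^t\in\ggo_2$.

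\emph{(ii)$\Rightarrow$(i).} Fix a block-diagonal derivation $D$ realising (ii), which — as the previous implication shows is what arises, and as the computation below makes necessary — we take with $D_\pg-D_\pg^t\in\ggo_2$; then $\theta(D_\pg)\vp=\theta(\tfrac12(D_\pg+D_\pg^t))\vp$, so $\Delta\vp=\theta(Q_{d\tau})\vp=\theta(cI+D_\pg)\vp=-3c\vp+\theta(D_\pg)\vp=-3c\vp-\lca_{X_D}\vp$, the soliton equation. For the flow, put $\vp(t):=c(t)\,e^{s(t)D_\pg}\cdot\vp$ with $c(0)=1$, $s(0)=0$, and apply $e^{-s(t)D_\pg}\cdot$ to $\dpar\vp(t)=\Delta\vp(t)$: using $\frac{d}{dt}(e^{s(t)D_\pg}\cdot\vp)=\dot s(t)\theta(D_\pg)(e^{s(t)D_\pg}\cdot\vp)$, the scaling weight, naturality, and $\Ad(e^{\pm s D_\pg})D_\pg=D_\pg$, it becomes
$$
\dot c(t)\,\vp+c(t)\dot s(t)\,\theta(D_\pg)\vp=c(t)^{1/3}\big(-3c\,\vp+\theta(D_\pg)\vp\big).
$$
Matching coefficients of $\vp$ and $\theta(D_\pg)\vp$ (independent unless $\theta(D_\pg)\vp\in\RR\vp$, in which case $\vp$ is an eigenform and the solution is a pure rescaling) gives the decoupled ODEs $\dot c(t)=-3c\,c(t)^{1/3}$ and $c(t)\dot s(t)=c(t)^{1/3}$. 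The substitution $c(t)=u(t)^{3/2}$ turns the first into $\dot u=-2c$, so $u(t)=-2ct+1$ and $c(t)=(-2ct+1)^{3/2}$; then $\dot s(t)=(-2ct+1)^{-1}$ gives $s(t)=-\tfrac{1}{2c}\log(-2ct+1)$, and $s(t)=t$ when $c=0$. By short-time uniqueness for the Laplacian flow of closed $G_2$-structures this is the solution starting at $\vp$, with soliton constant $-3c$ and vector field $X_D$ as claimed.

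The crux is establishing the two invariance properties — above all the exact exponent $\tfrac13$, which is what makes the $c(t)$-equation integrate to a power $\tfrac32$ — and controlling the splitting $\sog(\pg)=\ggo_2\oplus\qg_7$: the symmetry of $Q_{d\tau}$, equivalently the vanishing of the $\Lambda^3_7\pg^*$-component of $d\tau=\Delta\vp$, is exactly the input that forces the skew-symmetric part of $D_\pg$ into $\ggo_2$, so that it acts trivially on $\vp$ via $\theta$ and the conjugate $\Ad(e^{sD_\pg})Q_{d\tau}$ does not drift, allowing the ansatz $e^{s(t)D_\pg}\cdot\vp$ to close up. The remaining steps — the reduction of the PDE to the two scalar ODEs and their integration — are routine.
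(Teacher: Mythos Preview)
The paper does not prove this proposition; it is quoted from \cite[Proposition 4.5 and Remarks 4.6, 4.7]{LF} and used as a black box. So there is no in-paper argument to compare against, and your proof has to stand on its own.

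Your overall strategy and computations are sound: the scaling law $\Delta(c\vp)=c^{1/3}\Delta\vp$, the naturality $\Delta(e^{sD_\pg}\cdot\vp)=e^{sD_\pg}\cdot\Delta\vp$, the reduction to the two scalar ODEs, and their integration are all correct. The direction (i)$\Rightarrow$(ii) is complete, and you correctly extract the extra information that the derivation produced there satisfies $D_\pg-D_\pg^t\in\ggo_2$.

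There is, however, a genuine gap in (ii)$\Rightarrow$(i) and in the ``In that case'' conclusions. Condition (ii) constrains only the \emph{symmetric} part of $D_\pg$; nothing in (ii) alone forces $D_\pg-D_\pg^t\in\ggo_2$, and you do not show that a given $D$ can always be replaced by another block-diagonal derivation $\tilde D\in\Der(\ggo)$ with $\tfrac12(\tilde D_\pg+\tilde D_\pg^t)=Q_{d\tau}-cI$ and $\tilde D_\pg-\tilde D_\pg^t\in\ggo_2$ (this would require the difference to be a derivation, which is not automatic). Yet your verification of both the soliton identity $\Delta\vp=-3c\vp-\lca_{X_D}\vp$ and of the flow formula $\vp(t)=c(t)e^{s(t)D_\pg}\cdot\vp$ rests precisely on $\theta(D_\pg)\vp=\theta\big(\tfrac12(D_\pg+D_\pg^t)\big)\vp$, i.e.\ on that very condition. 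The justification ``as the previous implication shows is what arises'' is circular here: you cannot invoke the output of (i)$\Rightarrow$(ii) while proving (ii)$\Rightarrow$(i). This is almost certainly the point addressed by Remarks 4.6 and 4.7 in \cite{LF}, which the citation explicitly includes alongside Proposition 4.5; to close the gap you would need either to supply that argument or to restrict the statement to derivations with skew part in $\ggo_2$.
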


See Section \ref{homog} for the definitions of $\Aut(G/K)$ and $X_D$.  The homogeneous $G_2$-structure $(G/K,\vp)$ is called a {\it semi-algebraic soliton} when part (i) holds, and if in addition in part (ii) one has that $D^t\in\Der(\ggo)$, then $(G/K,\vp)$ is called an {\it algebraic soliton}.  It is worth pointing out that the concepts of semi-algebraic and algebraic solitons are applied to homogeneous spaces, not to homogeneous manifolds, since the conditions may depend on the presentation of $M$ as a homogeneous space $G/K$ chosen.  Algebraic solitons are geometrically characterized among homogeneous Laplacian solitons as those which are {\it Laplacian flow diagonal}: the family of symmetric operators $\{ Q_{d\tau(t)}:t\in(T_-,T_+)\}$ simultaneously diagonalizes (see \cite[Theorem 4.10]{LF}).

The above proposition can be used to show that Bryant's example is a steady Laplacian soliton as follows.  This provides a positive answer to Question \ref{ste}.

\begin{example}\label{Bryant2} ({\it Bryant's example is a steady LS})
It follows from \eqref{quad-Ric} that in Example \ref{Bryant},
$$
\Ricci=\Diag(-12,-12,-12,0,0,0,0), \quad \tau^2=\Diag(0,0,0,-36,-36,-36,-36),
$$
and thus
$$
Q_{d\tau}=\Diag(0,0,0,-6,-6,-6,-6) = D_\pg,
$$
for $D=\Diag(0,0,0,0,0,0,-6,-6,-6,-6)\in\Der(\ggo)$.  $(G/K,\vp)$ is therefore a steady LS (algebraic) by Proposition \ref{hom-sol}.  Note that this does not imply that the compact quotient $(\Gamma\backslash G/K,\vp)$ is a LS, as the field $X_D\in\mathfrak{X}(G/K)$ does not descend to $\Gamma\backslash G/K$.  The Laplacian flow evolution of $\vp$ on the compact manifold $\Gamma\backslash G/K$ is however eternal and somewhat striking, it may be described as `locally' self-similar.  Explicitly, $\vp(t)=e^{12t}\vp+(1-e^{12t})e^{123}$.

Since $\Ricci=-12I+E_\pg$ for the derivation $E$ of $\ggo$ defined by $E|_{\slg_2(\CC)}=0$ and $E|_{\CC^2}=12I$, we obtain that $(G/K,\ip)$ is an expanding Ricci soliton (see \cite[Section 3]{homRS}).
\end{example}

\begin{remark}
It is worth pointing out that the eternal Laplacian flow solution given in \cite[Example 2]{Bry} is not correct, one can check that the one-parameter family \cite[(6.24)]{Bry} does not solve the Laplacian flow equation.  The solution is actually given by
$$
\sigma(t) = \left(\left(\tfrac{10}{3}t+1\right)^{3/5} - 1\right) \omega^{123} + \sigma, \qquad t\in\left(-\tfrac{3}{10},\infty\right),
$$
and $\sigma$ is precisely the expanding LS worked out in \cite[Section 7]{BF}.  This follows from Proposition \ref{hom-sol} and has also been computed in \cite[Theorem 4.2]{FrnFinMnr}.
\end{remark}

We now study a $3$-parameter family of closed $G_2$-structures and find a new example with the same flavor.

\begin{example}\label{tripleA} ({\it New example of a homogeneous $G_2$-structure which is ERP and also a steady LS})
Let $\ggo$ be the solvable Lie algebra with basis $\{ e_1,\dots,e_7\}$ such that $\ngo:=\la e_3,e_4,e_5,e_6\ra$ is an abelian ideal and
$$
\ad{e_7}|_\ngo=\Diag(a,a,-a,-a), \quad
\ad{e_1}|_\ngo=\Diag(b,-b,b,-b), \quad
\ad{e_2}|_\ngo=\Diag(c,-c,-c,c).
$$
Equivalently, $de^1=de^2=de^7=0$ and
$$
\begin{array}{ll}
de^3=-ae^{73}-be^{13}-ce^{23}, &\quad de^4=-ae^{73}+be^{13}+ce^{23}, \\
de^5=ae^{73}-be^{13}+ce^{23}, &\quad de^6=ae^{73}+be^{13}-ce^{23}.
\end{array}
$$
Note that actually a family of Lie algebras depending on parameters $a,b,c\in\RR$ is being considered.  For the $G_2$-structure $\vp=\omega\wedge e^7+\rho^+$ defined in \eqref{phican}, a straightforward computation gives that $d\vp = 0$.  Thus $\vp$ is a closed $G_2$-structure on the solvable Lie group $G_{a,b,c}$ with Lie algebra $\ggo$, for any $a,b,c\in\RR$.  Up to isometry and scaling, we can assume in the nonflat case that
$$
a^2+b^2+c^2=3, \qquad a\geq b\geq c\geq 0.
$$
It is straightforward to see by using the formula for the differential $d$ above that the torsion $2$-form of $(G_{a,b,c},\vp)$ is given by
$$
\tau = -2ae^{34} -2be^{35} +2ce^{36} -2ce^{45} -2be^{46} +2ae^{56}, \qquad |\tau|^2=24,
$$
so as a matrix,
$$
\tau^2=-12\Diag\left(0,0,1,1,1,1,0\right).
$$
The Ricci operator and scalar curvature of $(G_{a,b,c},\ip)$ are respectively given by
$$
\Ricci=-4\Diag\left(b^2,c^2,0,0,0,0,a^2\right), \qquad R=-12,
$$
(see e.g.\ \cite[(25)]{solvsolitons}) so it follows from \eqref{ricop2} that
$$
Q_{d\tau} = \Diag(-4b^2+4, -4c^2+4, -2, -2, -2, -2, -4a^2+4).
$$
By using Proposition \ref{hom-sol} and the fact that any derivation of a solvable Lie algebra has its image contained in the nilradical, it is easy to prove that $(G_{a,b,c},\vp)$ is a semi-algebraic soliton, i.e. $Q_{d\tau}=kI+\unm(D+D^t)$ for some $k\in\RR$ and $D\in\Der(\ggo)$, if and only if either

\begin{itemize}
\item[(i)] $a=b=c=1$, $k=0$, so $(G_{1,1,1},\vp)$ is a steady Laplacian soliton (algebraic),
\item[(ii)] or $a=b=\sqrt{3/2}$, $c=0$ and $k=-2$,
\item[(iii)] or $a=\sqrt{3}$, $b=c=0$ and $k=-8$.
\end{itemize}

Note that in the last two cases $(G_{a,b,c},\vp)$ is an expanding Laplacian soliton (algebraic), and that each of the three isomorphism classes of Lie groups involved in the family does appear in the above list of solitons.  It follows from the formula for $\Ricci$ given above that the list also classifies those $(G,\ip)$ that are Ricci solitons, which are all expanding (see also \cite[Theorem 4.8]{solvsolitons}).  A straightforward computation gives that $\ast(\tau\wedge\tau) = -24e^{127}$ and
$$
\Delta\vp=d\tau = 4\left(a^2e^{347} + a^2e^{567} + b^2e^{135} - b^2e^{146} - c^2e^{236} - c^2e^{245}\right), \qquad\forall a,b,c.
$$
On the other hand, it follows from \eqref{quad-Q} that $(G,\vp)$ is quadratic if and only if it is extremally Ricci pinched, if and only if $a=b=c=1$.
Finally, consider the function from Question \ref{ineq}
$$
F(a,b,c)=\frac{R^2}{|\Ricci|^2} = \frac{(a^2+b^2+c^2)^2}{a^4+b^4+c^4},
$$
which provides an invariant up to equivalence and scaling for $G_2$-structures.  Note that $F\leq 3$, that is, the condition in Question \ref{ineq} holds among this family.  Moreover, $F(a,b,c)=3$ if and only if $a=b=c=1$.  We also have that $1\leq F$ with equality precisely at the expanding Laplacian soliton in part (iii) above.  The value of $F$ at the expanding Laplacian soliton in part (ii) is $2$.
\end{example}

\begin{remark}
It is difficult to see whether the above extremally Ricci pinched example is equivalent or not (up to scaling) to Bryant's example (see Example \ref{Bryant}), as the usual invariants all coincide.  This will be answered in the negative in Remark \ref{notequiv}.
\end{remark}

\begin{example}\label{triplA2} ({\it LF evolution of the family $(G_{a,b,c},\vp)$})
The bracket flow approach developed in \cite[Sections 3.2,3.3]{LF} can be used to study the Laplacian flow evolution of the family of $G_2$-structures considered in Example \ref{tripleA}.  Indeed, it is easy to see that the family of Lie brackets $\{\mu=\mu(a,b,c):a,b,c\geq 0\}$ on the vector space $\ggo$ defined at the beginning of the example is invariant by the bracket flow $\mu'=\delta_\mu(Q_\mu)$, which becomes the ODE for real functions $a=a(t)$, $b=b(t)$, $c=c(t)$ given by
$$
\left\{\begin{array}{l}
a'=(-4a^2+\tfrac{4}{3}(a^2+b^2+c^2))a, \\ \\
b'=(-4b^2+\tfrac{4}{3}(a^2+b^2+c^2))b, \\ \\
c'=(-4c^2+\tfrac{4}{3}(a^2+b^2+c^2))c.
\end{array}\right.
$$
We first note that this is precisely the gradient flow of the functional $H=\tfrac{1}{3}f-g$, where
$$
f(a,b,c):=(a^2+b^2+c^2)^2=\tfrac{1}{16}R^2, \qquad g(a,b,c):=a^4+b^4+c^4=\tfrac{1}{16}|\Ricci|^2.
$$
One has that $H\leq 0$, and equality holds if and only if $a=b=c$, i.e.\ $(G_{a,b,c},\vp)$ is a steady Laplacian soliton.  Since along the bracket flow $f'=8H\leq 0$, all the Laplacian flow solutions are immortal among this family.  By using that $g'=16\left(\frac{1}{3}f^{1/2}g-(a^6+b^6+c^6)\right)$, we obtain that our invariant functional $F=f/g$ satisfies for $f=1$ that
$$
g^2F'=16(-g^2+a^6+b^6+c^6)\geq 0,
$$
with equality only at the Laplacian solitons given in parts (i), (ii) and (iii) in Example \ref{tripleA}.  Thus $F$ is strictly increasing along any of these Laplacian flow solutions that is not a soliton.  The same behavior for $F$ is obtained for the bracket flow corresponding to the Ricci flow, which is minus the gradient flow of the functional $g$.  Concerning convergence, it is easy to see that for the bracket flow solution $\mu(t)$ starting at $\mu_0=\mu(a,b,c)$ we have that
$$
\mu/|\mu|\underset{t\to\infty}\longrightarrow\left\{\begin{array}{ll}
\tfrac{1}{\sqrt{24}}\mu(1,1,1), &\qquad a\geq b\geq c>0;\\ \\
\tfrac{1}{4}\mu(1,1,0), &\qquad a\geq b>0, \quad c=0;\\ \\
\tfrac{1}{\sqrt{8}}\mu(1,0,0), &\qquad a>0, \quad b=c=0.
\end{array}\right.
$$
It follows from \cite[Corollary 3.6]{LF} that on each of the Lie groups $G_{1,1,1}$, $G_{1,1,0}$ and $G_{1,0,0}$, the Laplacian flow solution $\vp(t)$ starting at any closed left-invariant $G_2$-structure satisfies that $r|\tau_t|_t^3\vp(t)$ smoothly converges up to pull-back by diffeomorphisms to the (corresponding) Laplacian soliton $\vp$, where $r$ equals $(24)^{-3/2}$, $(16)^{-3/2}$ and $(8)^{-3/2}$, respectively.
\end{example}

The following example is very generous.  On one hand, it provides the first examples of shrinking Laplacian solitons, answering Question \ref{shr}.  On a second hand, it shows that the functional $F=\frac{R^2}{|\Ricci|^2}$ is not always $\leq 3$ (see Example \ref{Htype-ineq} below).

\begin{example}\label{Htype} ({\it First examples of shrinking LSs})
We consider the family of solvable Lie algebras $\ggo$ depending on a parameter $a\in\RR$ with basis $\{ e_1,\dots,e_7\}$ such that $\hg:=\la e_1,\dots,e_6\ra$ is a $2$-step nilpotent ideal with Lie bracket
$$
[e_1,e_4]=-e_5, \quad [e_1,e_3]=-e_6, \quad [e_2,e_3]=-e_5, \quad [e_2,e_4]=e_6,
$$
and
$$
\ad{e_7}|_\hg=\Diag\left(a,a,\unm-a,\unm-a,\unm,\unm\right)\in\Der(\hg), \qquad a\in\RR.
$$
Endow the corresponding simply connected solvable Lie group $G_a$ with the left-invariant $G_2$-structure determined by the positive $3$-form $\vp=\omega\wedge e^7+\rho^+$ given in \eqref{phican}.  It is straightforward to see that $d\vp=0$ for any $a\in\RR$ and the torsion $2$-form of $(G_a,\vp)$ is given by
$$
\tau =2(1-a)e^{12}+(1+2a)e^{34}-3e^{56},
$$
so as a matrix,
$$
\tau^2=\Diag\left(-4(1-a)^2, -4(1-a)^2, -(1+2a)^2, -(1+2a)^2, -9,-9,0\right).
$$
It follows from formula \cite[(25)]{solvsolitons} that the Ricci operator of $(G_a,\ip)$ equals
$$
\Ricci=\Diag\left(-1-2a,-1-2a,-2+2a,-2+2a, 0,0,-1+2a-4a^2\right),
$$
and the remaining quantities we need to compute the operator $Q_{d\tau}$ using \eqref{ricop2} are the scalar curvature of $(G_a,\ip)$ and the norm of the torsion, which are respectively given by
$$
R=-7+2a-4a^2, \qquad |\tau|^2=14-4a+8a^2.
$$
We therefore obtain that
\begin{align}
Q_{d\tau} = \Diag( & -\tfrac{2}{3}+\tfrac{4}{3}a-\tfrac{2}{3}a^2, -\tfrac{2}{3}+\tfrac{4}{3}a-\tfrac{2}{3}a^2, -\tfrac{1}{6}-\tfrac{2}{3}a-\tfrac{2}{3}a^2, -\tfrac{1}{6}-\tfrac{2}{3}a-\tfrac{2}{3}a^2, \label{Qb0}\\
& -\tfrac{13}{6}-\tfrac{2}{3}a+\tfrac{4}{3}a^2, -\tfrac{13}{6}-\tfrac{2}{3}a+\tfrac{4}{3}a^2, \tfrac{4}{3}+\tfrac{4}{3}a-\tfrac{8}{3}a^2), \notag
\end{align}
which implies that the $G_2$-structures $(G_a,\vp)$ are all algebraic solitons (see Proposition \ref{hom-sol}).  Indeed, we have that
$$
Q_{d\tau}=cI+D, \qquad c=c(a)=\tfrac{4}{3}+\tfrac{4}{3}a-\tfrac{8}{3}a^2,
$$
where
\begin{align*}
D = \Diag( & -2+2a^2, -2+2a^2, -\tfrac{3}{2}-2a+2a^2, -\tfrac{3}{2}-2a+2a^2,  \\
& -\tfrac{7}{2}-2a+4a^2, -\tfrac{7}{2}-2a+4a^2, 0)\in\Der(\ggo).
\end{align*}
Since $(G_a,\vp)$ is equivalent to $(G_b,\vp)$ for $b=\unm-a$, we assume from now on that
$$
\unc\leq a<\infty.
$$
Note that if $a,b\in\left[\unc,\infty\right)$, then $G_a$ is isomorphic to $G_b$ if and only if $a=b$.  This implies that if $a\ne b$, then $(G_a,\vp)$ and $(G_b,\vp)$ can not be equivalent, not even up to scaling, since two completely solvable Lie groups endowed with left-invariant metrics which are isometric must be isomorphic (see \cite{Alk}).  This can also be shown by considering ratios of eigenvalues of $\Ricci$ in this particular case.

The number $c$ is strictly decreasing as a function of $a$ on $[\unc,\infty)$, giving rise to the three types of Laplacian solitons:
$$
\left\{\begin{array}{l}
c(a)>0, \quad \unc\leq a <1, \quad (G_a,\vp) \;\mbox{is a shrinking Laplacian soliton};  \\ \\
c(1)=0, \quad (G_1,\vp) \;\mbox{is a steady Laplacian soliton};\\ \\
c(a)<0, \quad 1<a<\infty, \quad (G_a,\vp) \;\mbox{is an expanding Laplacian soliton}.
\end{array}\right.
$$
On the other hand, $(G_a,\ip)$ is a Ricci soliton if and only if $a=1$ (see \cite[Theorem 4.8]{solvsolitons}).  We note that $(G_a,\ip)$ has $\Ricci\leq 0$ if and only if $\unc\leq a\leq 1$, though it never has nonpositive sectional curvature: the curvature of the plane generated by $e_3,e_5$ equals $\unm a>0$.  We give the following formulas for completeness:
$$
\Delta\vp = d\tau = -4a(1-a)e^{127} + (-1+4a^2)e^{347} + 3e^{567} +3\rho^+,
$$
$$
\ast(\tau\wedge\tau) = -6(1+2a)e^{127} - 12(1-a)e^{347} + 4(1-a)(1+2a)e^{567}.
$$
Recall that one can use the last paragraph in Proposition \ref{hom-sol} to explicitly obtain the Laplacian flow solution starting at each of the above $G_2$-structures.  For instance, the steady soliton solution (i.e.\ $a=1$) is given by $\vp(t)=e^{127}+e^{6t}(\vp-e^{127})$, and the shrinking Laplacian flow solutions have a finite-time singularity at $T=\frac{1}{2c}$ and are of the form
$$
\vp(t) = (-2ct+1)^{n_1}e^{127} + (-2ct+1)^{n_2}e^{347} + (-2ct+1)^{n_3}e^{567} + (-2ct+1)^{n_4}\rho^+,
$$
for some numbers $n_1\geq n_2>n_3>n_4>0$.  Thus $\vp(t)\to 0$ as $t\to T$.  The asymptotic behavior of the soliton is determined by the largest exponent $n_1=\frac{3a}{2(1+2a)}$, which is strictly increasing on $a$ and satisfies $\unc\leq n_1<\unm$ for $\unc\leq a<1$.
\end{example}

\begin{example}\label{Htype-ineq} ({\it The value of $\frac{R^2}{|\Ricci|^2}$ can be $>3$})
In Example \ref{Htype}, the invariant functional
$$
F(a)=\frac{R^2}{|\Ricci|^2} = \frac{(-4a^2+2a-7)^2}{16a^4-16a^3+28a^2-12a+11},
$$
gives an alternative proof of the fact that the family $(G_a,\vp)$ is pairwise non-homothetic, as $F$ strictly decreases on $\left[\unc,\infty\right)$.  Moreover, $F\geq 1$,
$$
F(\unc)=\tfrac{81}{17}>3=F(1), \qquad \lim_{a\to\infty} F(a) = 1 \qquad \left(\tfrac{81}{17}\approx 4.76\right).
$$
Thus the shrinking Laplacian solitons $(G_a,\vp)$, $\unc\leq a<1$ all provide counterexamples to Question \ref{ineq}.  It follows from \eqref{quad-Q} and the formulas for $Q_{d\tau}$ and $\tau^2$ given in Example \ref{Htype} that $(G_a,\vp)$ is quadratic if and only if it is ERP, if and only if $a=1$.  Thus the steady LS $(G_1,\vp)$ is also ERP.
\end{example}

The following natural questions arise from what we have seen in all examples above.

\begin{question}\label{ERP-ste}
Is there any kind of interplay between extremally Ricci pinched $G_2$-structures and steady Laplacian solitons?
\end{question}

We now show that the ERP $G_2$-structure $(G_1,\vp)$ in the above example is equivalent to Bryant's example.

\begin{example}\label{Bryant3} ({\it $(G_1,\vp)$ is equivalent to Bryant's example})
Back to Example \ref{Bryant}, if we set
$$
f_1:=e_1=\left[\begin{smallmatrix} -1&0\\ 0&1 \end{smallmatrix}\right], \quad f_2:=\unm(-e_2+Z_3)=\left[\begin{smallmatrix} 0&\im\\ 0&0 \end{smallmatrix}\right], \quad f_3:=-\unm(e_3+Z_2)=\left[\begin{smallmatrix} 0&1\\ 0&0 \end{smallmatrix}\right],
$$
then $\ggo=\sug(2)\oplus\sg$, where $\sg:=\la f_1,f_2,f_3,e_4,e_5,e_6,e_7\ra$ is a solvable Lie subalgebra of $\ggo$.  It follows that the corresponding solvable Lie subgroup $S$ meets $K$ only at the identity element and thus $S$ acts simply and transitively on $G/K$.  In this way, $\vp$ defines a left-invariant $G_2$-structure $\psi$ on $S$ such that $(S,\psi)$ and $(G/K,\vp)$ are equivalent.  The nilradical of $\sg$ is $\ngo=\la f_2,\dots,e_7\ra$ and its Lie bracket is given by
$$
[f_2,e_6]=e_5, \quad [f_2,e_7]=e_4, \quad [f_3,e_6]=e_4, \quad [f_3,e_7]=-e_5,
$$
so $\ngo$ is isomorphic to the Lie algebra $\hg$ in Example \ref{Htype}.  We furthermore have that
$$
\ad_\sg{f_1}|_{\ngo}=\Diag(-2,-2,-1,-1,1,1),
$$
which implies that $\sg$ is isomorphic to the Lie algebra in Example \ref{Htype} for $a=1$.  It is easy to check that $(S,\psi)$, and consequently $(G/K,\vp)$, is indeed equivalent to the steady soliton $(G_1,\frac{1}{8}\vp)$.
\end{example}

\begin{remark}
In the above example, in spite of the solvable Lie group $G_1$ does not admit a lattice, as it is not even unimodular, there exists a torsion-free and discrete subgroup $\Gamma_1\subset\Aut(G_1,\vp)$ such that $M_1:=\Gamma_1\backslash G_1$ is a compact manifold and $(M_1,\vp)$ is locally equivalent to $(G_1,\vp)$ (see the last paragraph in Example \ref{Bryant}).  The moral of this is that one may not need a lattice in the Lie group $G$ itself to get a locally homogeneous compact version of a left-invariant geometric structure on $G$.
\end{remark}

\begin{remark}\label{notequiv}
Another consequence of Example \ref{Bryant3} is that Bryant's example $(G/K,\vp)$ is not equivalent (even up to scaling) to the extremally Ricci pinched $G_2$-structure given in Example \ref{tripleA} for $a=b=c=1$.  Indeed, such solvable Lie group is not isomorphic to $G_1$, but it is completely solvable as $G_1$, so they can not admit isometric left-invariant metrics (see \cite{Alk}).
\end{remark}

It is worth noticing that on the families given in both Examples \ref{tripleA} and \ref{Htype} the functional $F=\frac{R^2}{|\Ricci|^2}$ satisfies that $F\geq 1$, suggesting that there could be a lower bound for $F$ on homogeneous closed $G_2$-structures. The following example shows that this is not the case.

\begin{example}\label{muAex} ({\it No positive lower bound for $\frac{R^2}{|\Ricci|^2}$})
In \cite[Section 5]{LF}, almost abelian solvable Lie groups endowed with left-invariant closed $G_2$-structures are studied. Using the notation in that paper, we consider the family $(G_A,\vp)$ for the matrices
$$
A=\left[\begin{array}{c|c}
B&0\\\hline
0&B\\
\end{array}\right]\in\slg(3,\CC), \qquad B=\left[\begin{smallmatrix}
a&-1&0\\
1&-a&0\\
0&0&0
\end{smallmatrix}\right], \qquad a\in\RR.
$$
It follows from the formula for $\Ricci$ in \cite[(26)]{LF} that the functional $F=\frac{R^2}{|\Ricci|^2}$ on this family is given by
$$
F(a) = \frac{a^4}{a^4+2a^2},
$$
and so $F(a)\to 0$, as $a\to 0$.  Note that at $a=0$, one obtains a parallel $G_2$-structure which is equivalent to the flat $\RR^7$ by \cite[Proposition 5.6]{LF}.
\end{example}

\begin{example}\label{muAex2} ({\it Divergence of $\frac{R^2}{|\Ricci|^2}$ at the parallel structure})
If in the above example we consider instead
$$
A=\left[\begin{array}{c|c}
B&0\\\hline
0&B\\
\end{array}\right]\in\slg(3,\CC), \qquad B=\left[\begin{smallmatrix}
b&-1&0\\
1&b&0\\
0&0&-2b
\end{smallmatrix}\right], \qquad b>0,
$$
then we obtain a continuous family of expanding LSs (see \cite[Proposition 5.22]{LF}).  Let $A_0$ denote the parallel $G_2$-structure equivalent to the flat $\RR^7$ obtained for either $a=0$ in the above example or $b=0$ in this example.  Using \cite[(26)]{LF}, it is easy to see that on this new family, $F(b)\equiv 1$.  This implies that the limit of $F$ as $A$ goes to $A_0$ does not exist.
\end{example}

\end{document}